\documentclass{amsart}
\usepackage{verbatim}
\usepackage{rotating} 
\usepackage{amssymb}
\usepackage{mathrsfs,amsfonts,amsmath,amssymb,epsfig,amscd,xy,amsthm,pb-diagram} 



\hyphenation{arch-i-med-e-an}


\newtheorem{theorem}{Theorem}[section]
\newtheorem{proposition}[theorem]{Proposition}

\newtheorem{lemma}[theorem]{Lemma}

\newtheorem{corollary}[theorem]{Corollary}

\newtheorem{definition}[theorem]{Definition}

\theoremstyle{plain}

\theoremstyle{remark}

\newtheorem{remark}[theorem]{Remark}

\newcommand{\C}{{\mathbb C}}

\newcommand{\Q}{{\mathbb Q}}

\newcommand{\Z}{{\mathbb Z}}
\newcommand{\N}{{\mathbb N}}

\newcommand{\cZ}{{\mathcal Z}}

\newcommand{\Gmn}{\mathbb{G}_{\text{m}}^{n}}

\newcommand{\Qbar}{\bar{\Q}}

\DeclareMathOperator{\lcm}{lcm}

\DeclareMathOperator{\Norm}{N}

\newcommand{\bP}{{\mathbb P}}

\newcommand{\bfx}{{\mathbf x}}
\newcommand{\bfe}{{\mathbf e}}
\newcommand{\bfn}{{\mathbf n}}

\newcommand{\bfu}{{\mathbf u}}
\newcommand{\bfr}{{\mathbf r}}

\newcommand{\bA}{{\mathbb A}}

\newcommand{\cO}{\mathcal{O}}
\newcommand{\cD}{\mathcal{D}}

\newcommand{\cP}{\mathcal{P}}

\newcommand{\scrK}{\mathscr{K}}

\newcommand{\bfk}{\mathbf k}



\author{Jason P.~Bell}
\address{
Jason P.~Bell\\
University of Waterloo\\
Department of Pure Mathematics\\
Waterloo, Ontario, Canada N2L 3G1}
\email{jpbell@uwaterloo.ca}

\author{Khoa D.~Nguyen}
\address{
Khoa D.~Nguyen \\
Department of Mathematics and Statistics\\
University of Calgary\\
AB T2N 1N4, Canada
}
\email{dangkhoa.nguyen@ucalgary.ca}

\author{Umberto Zannier}
\address{
Umberto Zannier\\
Scuola Normale Superiore, Classe di Scienze Matematiche e Naturali, Pisa, Italy
}
\email{umberto.zannier@sns.it}

\keywords{D-finite power series, Weil height, polynomial-exponential equations}
\subjclass[2010]{Primary: 11D61,11G50. Secondary: 13F25}

\begin{document}
	\title[D-finiteness]{D-finiteness, rationality, and height}
	\date{May 2019}
	\begin{abstract}		 
		Motivated by a result of 
		van der Poorten and Shparlinski for univariate power series, 
		Bell and Chen prove that if a multivariate power series
		over a field of characteristic $0$ 
		is D-finite and its coefficients belong to a finite set
		then it is a rational function. We extend and strengthen their results 
		to certain power series whose coefficients may form an 
		infinite set. We 
		also prove that if the coefficients of a univariate D-finite
		power series ``look like'' the coefficients of a rational
		function then the power series is rational.
		Our work 
		 relies on the theory of Weil heights, the Manin-Mumford theorem for tori, an application  of the Subspace Theorem, and various combinatorial arguments involving heights, power series, and linear recurrence sequences.
	\end{abstract}
	
	\maketitle
	
	\section{Introduction} \label{sec:intro}
	Let $\N$ denote the set of positive integers and let $\N_0:=\N\cup\{0\}$. 
	Let $m\in \N$ and 
	consider
	the ring $K[[x_1,\ldots,x_m]]$
	of power series in $m$ variables over a field $K$ of 
	characteristic $0$. Very broadly speaking, there are several highly 
	interesting results
	of the following form: if a power series $f$ satisfies the 
	property $\mathcal{P}_1$ and its coefficients satisfy
	the property $\mathcal{P}_2$ which is usually of an arithmetic 
	nature then property
	$\mathcal{P}_3$ holds. 
	For example (when $m=1$), 
	the Pisot's $d$-th Root 
	Conjecture, settled by Zannier \cite{Zan00_AP}, states
	that if $K$ is a number field, $f(x)=\displaystyle\sum_{i\geq 0}a_ix^i \in K[[x]]$ is a 
	rational function, and $a_i$ is
	the $d$-th power of an element of $K$ then
	there exists a rational function
	$g(x)=\displaystyle\sum_{i\geq 0}b_ix^i$ 
	such that $a_i=b_i^d$ for every $i$.
	There is a similar Pisot's Hadamard Quotient 
	Conjecture solved by Pourchet \cite{Pou79_SD} and van der Poorten \cite{vdP88_Sd} (see Rumely's note \cite{Rum88_NO} for more details and the paper by Corvaja-Zannier \cite{CZ02_FO} for a stronger version). 
	Note that $\mathcal{P}_1$ in the above results
	is the property that the given power series is a rational 
	function. 
	In this paper, we are interested in
	the situation when $\mathcal{P}_1$ is the so called 
	\emph{D-finiteness 
	property}.

	Let 
	$\bfn=(n_1,\ldots,n_m)\in \N_0^m$ and let
	$\bfx=(x_1,\ldots,x_m)$ be the vector of the indeterminates $x_1,\ldots,x_m$.
	We write
	$\bfx^{\bfn}$ to denote the monomial
	$x_1^{n_1}\ldots x_m^{n_m}$ having the total degree
	$\Vert \bfn\Vert:=n_1+\ldots+n_m$. 
	We also write
	$\displaystyle\frac{\partial^{\Vert \bfn\Vert}}{\partial\bfx^{\bfn}}$
	to denote the operator
	$$\left(\frac{\partial}{\partial x_1}\right)^{n_1}\ldots
	\left(\frac{\partial}{\partial x_m}\right)^{n_m}$$
	on $K[x_1,\ldots,x_m]$. A power series $f(\bfx)\in K[[\bfx]]$ is said to be 
	\emph{D-finite} (over $K(\bfx)$) if 
	all the derivatives
	$\displaystyle\frac{\partial^{\Vert \bfn\Vert}f}{\partial\bfx^\bfn}$
	for $\bfn\in \N_0^m$
	span a finite-dimensional vector space over $K(\bfx)$.
	Univariate power series satisfying linear differential
	equations 
	(such as the exponential function, hypergeometric series, etc.)
	 have played an important role in mathematics for hundreds 
	 of years. Since the 1960s certain $p$-adic and cohomological
	 aspects of univariate power series solutions of algebraic 
	 differential
	 equations have been developed by Dwork, Katz, and others (see \cite{DGS94_AI} and references therein).

	In 1980, Stanley wrote an expository paper
	\cite{Sta80_DF}
	introducing univariate D-finite power series and many of their properties
	from a combinatorial point of view.
	After that, multivariable D-finite power series were 
	introduced 
	by Lipshitz 
	\cite{Lip89_DF}
	and they have become an important part in enumerative 
	combinatorics especially in the theory of generating functions 
	\cite{Sta99_EC2}. From the linear partial differential equations satisfied
	 by a D-finite series $f(\bfx)$, one can show that the coefficients
	 of $f$ satisfy certain linear recurrence relations with
	 polynomial coefficients. 
	 In particular, if 
	 $f(\bfx)\in\Qbar[[\bfx]]$ is D-finite, the coefficients
	 of $f$ belong to a number field.
	
	Let $h$ denote the absolute
	logarithmic Weil height on $\Qbar$. 
	We have the following
	results of van der Poorten-Shparlinski \cite{vdPS96_OL}
	and Bell-Chen \cite{BC17_PS}:
	\begin{theorem}[van der Poorten-Shparlinski 1996]\label{thm:vdP-S}
	Let $f(x)=\sum_{n\in\N_0}a_nx^n\in \Q[[x]]$
	be a univariate D-finite power series with 
	rational coefficients. If
	$\lim\frac{h(a_n)}{\log\log n}=0$
	then the sequence $(a_n)_{n\in \N_0}$ is periodic.
	\end{theorem}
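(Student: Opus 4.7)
The strategy is to (i) use D-finiteness to produce a polynomial recurrence for $(a_n)$, (ii) combine the slow height growth with this recurrence to force $\{a_n : n\ge 0\}$ to be a \emph{finite} subset of $\Q$, and (iii) pass from finiteness of the value set to eventual periodicity via a pigeonhole argument on the recurrence.

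Since $f\in\Q[[x]]$ is D-finite over $\Q(x)$, the coefficients satisfy a nontrivial linear recurrence
\[
q_k(n)\,a_{n+k}+q_{k-1}(n)\,a_{n+k-1}+\cdots+q_0(n)\,a_n=0 \qquad (n\ge n_0),
\]
with $q_i\in\Z[x]$ and $q_k\not\equiv 0$. Writing $a_n=b_n/c_n$ in lowest terms, the hypothesis $h(a_n)=o(\log\log n)$ gives $\max(|b_n|,c_n)\le(\log n)^{o(1)}$, so in particular every prime dividing $c_n$ is itself of size $(\log n)^{o(1)}$. For each prime $p$, reading the recurrence $p$-adically yields
\[
v_p(a_{n+k})\ge \min_{0\le i<k}\bigl(v_p(q_i(n))-v_p(q_k(n))+v_p(a_{n+i})\bigr),
\]
so the $p$-contribution to the denominator of $a_n$ is controlled by cumulative drops $\sum_{j\le n}v_p(q_k(j))$, which grow linearly in $n$ for each fixed $p$. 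The height hypothesis is drastically smaller, and a careful combinatorial/averaging argument across primes and indices should show that these potential drops cannot actually accumulate: one concludes that only finitely many primes ever appear in any $c_n$, and with uniformly bounded exponents. Hence $\{c_n\}$ is finite, and a parallel archimedean argument (playing the polynomial growth of $|q_i(n)|$ in the recurrence against the sub-double-logarithmic bound on $h(a_n)$) forces the $|b_n|$ to be bounded as well. Thus $S:=\{a_n:n\ge 0\}$ is a finite subset of $\Q$.

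With $S$ finite, the window $k$-tuples $(a_n,\ldots,a_{n+k-1})\in S^k$ take only finitely many values, so some tuple $(s_0,\ldots,s_{k-1})$ recurs at an infinite index set $\mathcal{N}\subseteq\N$. For each $n\in\mathcal{N}$ with $q_k(n)\ne 0$, the value $a_{n+k}$ equals $R(n):=-\sum_{i<k}(q_i(n)/q_k(n))\,s_i$, a rational function of $n$ that takes only finitely many values along an infinite set of integers; hence $R$ is constant, equal to some $s_k\in S$. Thus the pattern $(s_0,\ldots,s_{k-1})$ always extends to $s_k$. Iterating this rigid extension rule on the finitely many tuples that occur infinitely often, we obtain a deterministic update rule on the finite state space $S^k$, and $(a_n)$ is eventually periodic.

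I expect the main obstacle to be the promotion of the soft growth $h(a_n)=o(\log\log n)$ to the hard bound $h(a_n)=O(1)$: the $p$-adic telescoping only bounds denominators by $O(n)$ a priori, so the slow height growth must be used globally — across all primes and indices simultaneously — to rule out any real denominator growth, rather than locally through a single step of the recurrence. Once finiteness of the value set is in hand, the combinatorial conclusion of eventual periodicity follows cleanly from the polynomial recurrence as sketched above.
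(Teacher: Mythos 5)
The paper does not actually prove this theorem: it is quoted as a result of van der Poorten and Shparlinski, whose proof (the authors note) constructs a technical auxiliary function, and the paper instead proves the strictly stronger Theorem~\ref{thm:new main 1} by a different method. Your step (ii) is a genuine gap, and you acknowledge it yourself: you never show that $h(a_n)=o(\log\log n)$ forces $\{a_n\}$ to be finite. The $p$-adic telescoping you sketch only gives an a priori bound on how the denominators \emph{could} grow; the promised ``averaging across primes and indices'' is exactly the missing content, and it is not obvious the local estimate can deliver it, since cancellation in recurrences with polynomial coefficients is delicate (e.g.\ $n\,a_{n+1}-(n+1)\,a_n=0$ has integer solutions even though $q_k(n)=n$ has unboundedly large prime factors, so nothing in the valuation inequality by itself explains why drops fail to accumulate). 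The archimedean side has the same difficulty.

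For comparison, the mechanism the paper uses in Proposition~\ref{prop:1 differential eq} is both sharper and more direct, and needs only $h(a_n)=o(\log n)$: rewrite $\sum_j q_j(n)a_{n+j}=0$ by gathering powers of $n$, so that $n$ is a root of a polynomial whose coefficients are fixed $\Qbar$-linear combinations of $a_n,\dots,a_{n+k}$ and hence have height $o(\log n)$. Since $h(n)=\log n$, while a root of a polynomial has height controlled by the heights of its coefficients (Proposition~\ref{prop:height properties}(d)), the leading coefficient must vanish for all large $n$. That vanishing is exactly the statement that the coefficient of $x^{n+k}$ in $P_k(x)f(x)$ is zero, so $P_k f$ is a polynomial and $f$ is rational; Proposition~\ref{prop:denominator 1 variable} then forces simple roots-of-unity poles, i.e.\ eventual periodicity. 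Your step (iii), deducing eventual periodicity from a finite value set and a polynomial recurrence, is correct as written, but that is the easy half; the theorem's real content is step (ii), and your proposal does not establish it.
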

	
	\begin{theorem}[Bell-Chen 2017]\label{thm:new B-C}
    Let $K$ be a field of characteristic $0$ and 
	let $f(\bfx)=\sum_{\bfn\in\N_0^m}a_{\bfn}\bfx^{\bfn}\in K[[\bfx]]$ be a D-finite power series in $m$ variables. 
	If the coefficients of $f$ belong to a finite set
	then $f$ is rational.
	\end{theorem}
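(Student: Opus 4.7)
The plan is to reduce the $m$-variate statement to the univariate case (Theorem~\ref{thm:vdP-S}) by induction on the number of variables $m$, after first descending the coefficient field to a number field.

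\emph{Reduction to a number field.} The coefficients of $f$ lie in the finite set $S\subset K$, which generates a finitely generated $\Q$-subalgebra $R\subseteq K$. By Hilbert's Nullstellensatz, $R$ admits a maximal ideal $\fm$ with residue field $R/\fm$ a number field; moreover, a generic choice of $\fm$ avoids the finitely many nonzero differences $s-s'$ for $s,s'\in S$, so the quotient map $\sigma\colon R\to R/\fm$ is injective on $S$. Applying $\sigma$ coefficient-wise produces a D-finite series $\sigma(f)$ over the number field $R/\fm$, whose coefficients lie in the finite set $\sigma(S)$; one checks by a standard descent on the associated P-recursion that rationality of $\sigma(f)$ pulls back to rationality of $f$. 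Hence we may assume $K$ is a number field and $h(a_\bfn)\le C$ for all $\bfn$.

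\emph{Base case and inductive setup.} For $m=1$, Theorem~\ref{thm:vdP-S} applies immediately: uniform boundedness of $h(a_n)$ trivially yields $h(a_n)/\log\log n\to 0$, so $(a_n)$ is eventually periodic and $f(x)$ is rational. For the inductive step, assume the theorem for $m-1$ variables. From D-finiteness of $f$, the coefficient sequence is P-recursive in each coordinate; in the $x_m$-direction there exist $P_0,\ldots,P_d\in K[\bfn]$ with $P_d\not\equiv 0$ satisfying
\[
\sum_{j=0}^{d} P_j(\bfn)\,a_{\bfn+j\bfe_m}=0 \qquad \text{for all } \bfn.
\]
For every $(n_1,\ldots,n_{m-1})$ lying outside the zero locus of $P_d(n_1,\ldots,n_{m-1},\cdot)$, the univariate slice $(a_{n_1,\ldots,n_{m-1},n_m})_{n_m}$ is D-finite in $n_m$ with values in the finite set $S$, and hence (by the base case) eventually periodic with some pre-period $N(n_1,\ldots,n_{m-1})$ and period $p(n_1,\ldots,n_{m-1})$.

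\emph{Main obstacle: uniform periodicity.} The technical heart of the argument is to bound $N$ and $p$ uniformly in $(n_1,\ldots,n_{m-1})$. Once a common period $p_0$ works, the series $(1-x_m^{p_0})\,f(\bfx)$ becomes polynomial in $x_m$ of bounded degree, whose coefficients form finitely many D-finite $(m-1)$-variate series with coefficients still in a finite set (built from differences of elements of $S$), and the inductive hypothesis finishes the proof. To extract the uniform bound, the guiding idea is that for $\|\bfn\|$ large the leading term $P_d(\bfn)$ dominates and the recursion becomes approximately translation-invariant along $\bfe_m$, so the ``state'' $(a_{\bfn},\ldots,a_{\bfn+(d-1)\bfe_m})$ is constrained to the finite set $S^{d}$ and must cycle through only finitely many configurations, forcing the periods to lie in a finite set. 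Controlling the degenerate directions where $P_d$ vanishes identically in $n_m$ requires invoking the P-recursions in the other coordinates together with the height bound $h(a_\bfn)\le C$; this combinatorial and arithmetic bookkeeping, carried out simultaneously across all $m$ coordinate directions, is the genuinely delicate step.
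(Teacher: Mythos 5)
Your proposal sketches essentially the Bell--Chen route (induction on $m$, slice the series in the $x_m$-direction, reduce to univariate periodicity, then force a uniform period), but it stops short of the step that carries the entire weight of the theorem. The paragraph labelled ``Main obstacle: uniform periodicity'' contains the genuine content of the proof and you only gesture at it; in fact the heuristic you offer there is not correct as stated. A P-recursion $R_M(n)a_{n+M}+\cdots+R_0(n)a_n=0$ with polynomial $R_j$ is not a time-invariant transition on the state space $S^M$: the map sending $(a_n,\ldots,a_{n+M-1})$ to $(a_{n+1},\ldots,a_{n+M})$ depends on $n$ through the coefficients $R_j(n)$, so the fact that states lie in a finite set does \emph{not} by itself force eventual periodicity. (A finite orbit revisits a state, but the future from that state differs the second time around.) Pinning this down is exactly the technical core of van der Poorten--Shparlinski's proof in the $m=1$ case, and Bell--Chen's multivariate proof requires additional machinery (syndetic sets, careful handling of the directions where the leading polynomial degenerates) that you do not supply. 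As written, the argument is an outline of a plan rather than a proof.

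It is also worth noting that the paper does not prove this theorem that way at all. Instead of chasing periodicity, the paper proves the stronger Theorem~\ref{thm:new main 1}(a) directly by a height-domination argument (Proposition~\ref{prop:1 differential eq}): in the recursion the coefficient of $r_i^{d_i}$, namely $\alpha_{d_i}$, must vanish once $\Vert\bfr\Vert$ is large, because otherwise Proposition~\ref{prop:height properties}(d) would force $h(0)$ to be at least $d_i\log r_i$ up to error terms controlled by the hypothesis $h(a_\bfn)=o(\log\Vert\bfn\Vert)$. That vanishing shows $P_{1,d_1}\cdots P_{m,d_m}f$ is a polynomial, hence $f$ is rational, with no induction on $m$ and no periodicity needed. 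The paper then deduces Theorem~\ref{thm:new B-C} from the effective version (Theorem~\ref{thm:main 1 effective}) by a specialization argument over a Noether normalization of the finitely generated algebra, choosing specializations at tuples of roots of unity to keep the heights uniformly bounded. Your opening ``reduction to a number field'' is in the same spirit as this specialization step and is fine in outline, but the inductive core you rely on afterward is the missing piece: if you want to complete your approach you must actually prove the uniform period bound, which is the hard combinatorial lemma in Bell--Chen; if you want a cleaner route, switch to the height inequality $d_i\log r_i\lesssim C\delta\log\Vert\bfr\Vert$ forcing the leading coefficient to vanish.
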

	
	In fact, a slightly more precise version of 
	Theorem~\ref{thm:vdP-S} was proved by 
	van der Poorten-Shparlinski \cite[pp.~147--148]{vdPS96_OL}. 
	Their method uses a technical construction of a
	certain auxiliary function. Although they 
	stated their result
	for power series with rational coefficients, it seems
	that the proof should remain valid over an arbitrary
	number field. 
	
	
	After a specialization argument,
	Theorem~\ref{thm:vdP-S} implies that 
	if the coefficients of a univariate 
	D-finite
	power series over 
	a field of characteristic $0$ belong to a finite set then 
	the series is rational. 
	Theorem~\ref{thm:new B-C} is a very recent result of
	Bell-Chen \cite{BC17_PS} generalizing the 
	above consequence
	for multivariate power series. The proof of
	Theorem~\ref{thm:new B-C} in \cite{BC17_PS}
	uses induction on
	the number of variables $m$ and
	various combinatorial arguments 
	involving
	the notion of syndetic subsets of $\N$.
	
	Our first 
	main result
	strengthens and generalizes both 
	Theorem~\ref{thm:new B-C}
	and Theorem~\ref{thm:vdP-S} at one stroke. 
	More specifically, we treat multivariate power series, 
	replace the function
	$\log\log(n)$ in Theorem~\ref{thm:vdP-S} by 
	the more dominant function $\log(n)$, and 
	let one conclude that certain non-rational
	power series are not D-finite even when
	the coefficients do not belong to a finite set.
	We have:
	\begin{theorem}\label{thm:new main 1}
	Let $f(\bfx)=\displaystyle\sum_{\bfn\in\N_0^m}a_{\bfn}\bfx^{\bfn}\in\Qbar[[\bfx]]$.
	Assume that $f$ is D-finite and 
	\begin{equation}\label{eq:thm 1}
	\displaystyle\lim_{\Vert\bfn\Vert\to\infty} \frac{h(a_{\bfn})}{\log\Vert\bfn\Vert}=0.
	\end{equation}
	Then the following hold:
	\begin{itemize}
		\item [(a)] $f$ is a rational function.
		\item [(b)] If $f$ is not a polynomial, its
		denominator, up to scalar multiplication, has the form
		$$\prod_{i=1}^{\ell} (1-\zeta_i\bfx^{\bfn_i})$$
	where $\ell\geq 1$, $\zeta_i$ is a root of unity, 
	$\bfn_i\in \N_0^m\setminus\{0\}$
	for $1\leq i\leq \ell$, 
	and the $1-\zeta_i\bfx^{\bfn_i}$'s
	are $\ell$ distinct irreducible
	polynomials.
		\item [(c)] The coefficients 
		$(a_{\bfn})_{\bfn\in\N_0^m}$
		belong to a finite set.	
	\end{itemize}
	\end{theorem}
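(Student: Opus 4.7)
The plan is to prove (a), (b), (c) in that order, with each step feeding into the next.

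For (a), the rationality assertion, the strategy is to combine the P-recursive structure of $(a_{\bfn})$ coming from D-finiteness with a Diophantine input. At the heart lies the univariate case: if $g(t)=\sum b_N t^N\in\Qbar[[t]]$ is D-finite and $h(b_N)=o(\log N)$, then $g$ is rational. Here $(b_N)$ satisfies a polynomial recurrence $\sum_{j=0}^{k}p_j(N)b_{N-j}=0$; an application of the Subspace Theorem in the style of van der Poorten--Shparlinski, sharpened to exploit the stronger $\log$-bound, forces this P-recurrence to collapse to a constant-coefficient recurrence, so $g$ is rational. To pass from the univariate to the multivariate case, I would consider substitutions $g(t)=f(t^{d_1},\ldots,t^{d_m})$ for suitable tuples $(d_1,\ldots,d_m)\in\N^m$: this preserves D-finiteness, the height hypothesis transfers to the coefficients of $g$ (after controlling the size of the fibers of $\bfn\mapsto\sum d_in_i$), and so each such $g$ is rational. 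Performing this for sufficiently many tuples $(d_i)$ — or alternatively inducting on $m$ using that D-finiteness and the height hypothesis descend to suitable coefficient sections of $f$ — then forces $f$ itself to be rational, for instance via a Hankel-rank argument.

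For (b), write $f=P/Q$ in lowest terms with $Q$ nonconstant. For each irreducible factor $Q_i$ of $Q$, I would study the Taylor expansion of $1/Q_i$ along rays in $\N_0^m$; the heights of the resulting coefficients are dictated by the zeros of $Q_i$ on the torus $\Gm^m$. The hypothesis $h(a_{\bfn})=o(\log\Vert\bfn\Vert)$ forces these zeros to have trivial Weil height, hence by Kronecker's theorem they are torsion points of $\Gm^m$. The zero locus of $Q_i$ thus contains a Zariski-dense set of torsion points, and the Manin--Mumford theorem for tori forces this locus to be a translated subtorus by a torsion point, which is to say $Q_i=1-\zeta_i\bfx^{\bfn_i}$ up to a scalar, with $\gcd\bfn_i=1$ (needed for irreducibility). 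Multiplicities $e_i\ge 2$ are ruled out, since $(1-\zeta_i\bfx^{\bfn_i})^{e_i}$ contributes growth factors $\binom{k+e_i-1}{e_i-1}\zeta_i^k$ to the coefficients along the ray $\N\bfn_i$, whose heights are of order $\log k$, contradicting the slow-height hypothesis.

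For (c), combining (a) and (b), $(a_{\bfn})$ is a multivariate linear recurrence sequence whose characteristic roots all lie in the torsion subgroup of $\Gm^m$, and each with multiplicity one. Standard linear-recurrence theory then shows $(a_{\bfn})$ is eventually periodic modulo a common period $N$ (a common multiple of the orders of the $\zeta_i$), so the set $\{a_{\bfn}:\bfn\in\N_0^m\}$ is finite.

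The main obstacle is part (a). The sharp univariate rationality statement under the $\log$-bound needs a careful application of the Subspace Theorem: the improvement from the $\log\log$ input of van der Poorten--Shparlinski to a $\log$ input must be balanced against the polynomial-in-$N$ coefficients of the P-recurrence, and the linear form constructed for the Subspace Theorem must be engineered to extract rationality rather than merely periodicity. The multivariate-to-univariate passage also requires care: assembling rationality of individual substitutions into rationality of $f$ involves either Hankel-rank bookkeeping or an induction with tight uniform control on the sections. In (b), the Manin--Mumford step requires first establishing that torsion points are Zariski-dense in each $V(Q_i)$, itself a nontrivial consequence of the height constraint on coefficients that must be traced through the irreducible factorization of $Q$.
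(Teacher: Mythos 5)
Your outline for (b) is close to the paper's in spirit (reduce to the torus, invoke Manin--Mumford, rule out multiplicities), and you correctly flag that the density-of-torsion-points step is the real work. But parts (a) and (c) have genuine gaps that are not just matters of polish.

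For (a), the specialization $g(t)=f(t^{d_1},\ldots,t^{d_m})$ does \emph{not} preserve the $o(\log)$ hypothesis: the coefficient $b_N=\sum_{\sum d_i n_i = N} a_{\bfn}$ is a sum over a fiber of size $\Theta(N^{m-1})$, so you only get $h(b_N)\le (m-1+o(1))\log N$, i.e.\ $O(\log N)$ rather than $o(\log N)$. And a $O(\log N)$ bound is not enough for rationality of a univariate D-finite series: the series $-\log(1-t)=\sum_{n\ge1} t^n/n$ is D-finite, has $h(b_n)=\log n$, and is irrational. So your proposed univariate lemma ``$o(\log N)$ height $\Rightarrow$ rational'' is never actually triggered after the substitution, and the Hankel/induction assembly step has nothing to feed on. The paper instead runs the height argument directly on the multivariate P-recurrence: collecting the coefficient of $\bfx^{\bfr}$ in each linear PDE gives a polynomial identity $\alpha_{d_i}r_i^{d_i}+\cdots+\alpha_0=0$ whose terms have height $O(\delta\log\Vert\bfr\Vert)$, while the left side would have height $\approx d_i\log r_i$ if $\alpha_{d_i}\ne0$; for $r_i\gtrsim\Vert\bfr\Vert/m$ this is a contradiction, so the leading coefficient vanishes and $P_{1,d_1}\cdots P_{m,d_m}f$ is a polynomial. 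No Subspace Theorem is needed for (a).

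For (c), the deduction ``characteristic roots are torsion and simple $\Rightarrow$ coefficients eventually periodic'' is false in several variables. Take $f=1/\bigl((1-x)(1-y)(1-xy)\bigr)$: every irreducible factor is of the required form and all multiplicities are one, yet $a_{i,j}=\min(i,j)+1$ is unbounded. Of course this $f$ violates the height hypothesis, which is precisely the point: (c) does not follow from (a)+(b) alone, and the $o(\log)$ bound has to be re-used throughout the argument. The paper's proof of (c) is correspondingly delicate: it inducts on $m$, slices $f=\sum_N g_N(x_1,\ldots,x_{m-1})x_m^N$, performs a monomial change of variables so that the exponent tuples $(r_{i,1},\ldots,r_{i,m-1})$ become totally ordered coordinatewise, solves for the partial-fraction data $s_i$ via Cramer's rule, and then shows that each resulting section $t_{\tau,i}$ inherits property $\cP$ by locating its coefficients inside $f$ (using the ordering to isolate the contribution of the smallest monomial $M_1$). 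Only then can the inductive hypothesis be applied to each $t_{\tau,i}$.

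Your multiplicity argument in (b) is in the right direction, and the Kronecker/Manin--Mumford idea is the one the paper uses, but note that the paper reaches density of torsion points not by reasoning about heights of zeros of $Q$ directly but by specializing $x_i\mapsto t^{u_i}$, bounding $h(\tau_n)\le(m-1+o(1))\log n$, applying the univariate bounded-multiplicity Proposition to deduce many distinct roots of unity among the poles, and then forcing a high-order root of unity to lie on $V(Q)$ but off any proposed lower-dimensional torsion coset, contradicting finiteness of maximal torsion cosets. That pigeonhole step is the content you'd need to supply.
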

	
	By specialization arguments, we have the
	following extension of the theorem by Bell-Chen:
	\begin{corollary}\label{cor:B-C structure}
	Let $K$, $m$, and $f$ be as in 
	Theorem~\ref{thm:new B-C}. 
	If the coefficients of $f$ belong to a finite
	set then parts (a) and (b) of 
	Theorem~\ref{thm:new main 1} hold.
	\end{corollary}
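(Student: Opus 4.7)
Part $(a)$ is immediate from Theorem \ref{thm:new B-C}, so the content lies in $(b)$. The plan is: reduce to $\bar\Q$ by specialization, apply Theorem \ref{thm:new main 1}(b) to the specialized series, and descend the denominator structure back to $K$.

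By Theorem \ref{thm:new B-C}, write $f=P/Q$ with coprime $P,Q\in K[\bfx]$ and $Q(0)=1$. Let $S\subset K$ be the finite coefficient set of $f$, and let $R\subset\bar K$ be the $\Q$-subalgebra generated by $S$, by the coefficients of $P$, $Q$, and of the $\bar K$-irreducible factors of $Q$, by the coefficients of a linear PDE system witnessing the D-finiteness of $f$, and by the multiplicative inverses of all nonzero differences $s-s'$ (for $s\neq s'\in S$) together with the inverses of all leading and monomial coefficients whose vanishing would spoil D-finiteness or the irreducible factorization of $Q$. Then $R$ is a finitely generated $\Q$-algebra, and the Nullstellensatz supplies, for each maximal ideal of $R$, a $\Q$-algebra homomorphism $\sigma\colon R\to\bar\Q$. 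For any such $\sigma$, the series $\sigma(f)\in\bar\Q[[\bfx]]$ is D-finite and has finite coefficient set $\sigma(S)$ (since $\sigma$ is injective on $S$), so Theorem \ref{thm:new main 1}(b) yields
\[
\sigma(Q)=c_\sigma\prod_{i=1}^{\ell_\sigma}\bigl(1-\zeta_i^{(\sigma)}\bfx^{\bfn_i^{(\sigma)}}\bigr),
\]
with each $\zeta_i^{(\sigma)}$ a root of unity and the factors distinct irreducibles in $\bar\Q[\bfx]$.

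Next, factor $Q=c\prod_j Q_j$ into monic irreducibles in $\bar K[\bfx]$. For generic $\sigma$, this factorization specializes without collision and the monomial support of each $Q_j$ is preserved. Matching with the displayed formula, each $\sigma(Q_j)$ is a scalar multiple of some $1-\zeta\bfx^{\bfn}$, so the support of $Q_j$ equals $\{0,\bfn_j\}$ for some $\bfn_j\in\N_0^m\setminus\{0\}$ independent of $\sigma$. After rescaling, $Q_j=1-\alpha_j\bfx^{\bfn_j}$ for some $\alpha_j\in\bar K^\times$ with $\sigma(\alpha_j)$ a root of unity for every generic $\sigma$.

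It remains to show each $\alpha_j$ is itself a root of unity. If $\alpha_j$ is algebraic over $\Q$, choose $\sigma$ whose restriction to the algebraic closure of $\Q$ in $\Frac(R)$ is the identity, so that $\alpha_j=\sigma(\alpha_j)$ is a root of unity. Otherwise $\alpha_j$ is non-constant on $\Spec R$ and, restricted to an irreducible component, defines a dominant morphism to $\bA^1$; by Chevalley's theorem, its image contains a Zariski-open subset of $\bA^1$, whose $\bar\Q$-points are cofinite in $\bar\Q$. Since the set of roots of unity has infinite complement in $\bar\Q$, this cofinite set is not contained in the roots of unity, so we can find a generic $\sigma$ with $\sigma(\alpha_j)$ not a root of unity, contradicting the preceding paragraph. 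The main technical obstacle throughout is this setup: arranging a single $\sigma$ to simultaneously respect injectivity on $S$, the irreducible factorization and monomial supports of $Q$, and D-finiteness of $f$; all these being Zariski-open conditions, they are handled by inverting an appropriate finite collection of nonzero elements of $R$.
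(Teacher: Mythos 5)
Your proof is correct and shares the paper's overall specialization strategy, but diverges from it in two instructive ways. For part~(a) you simply invoke Theorem~\ref{thm:new B-C}; the paper deliberately does \emph{not}, and instead re-derives rationality from scratch via Noether normalization: it chooses $y_1,\ldots,y_s$ with $R$ finite over $\Qbar[y_1,\ldots,y_s]$ and specializes only at points lying over tuples of roots of unity, precisely so that the heights of all specialized coefficients are bounded by a single $M$ and the \emph{effective} Theorem~\ref{thm:main 1 effective} yields a degree bound $N+\eta$ uniform over the dense set of specializations (which is what lets one lift ``$A_{1,d_1,\zeta}\cdots A_{m,d_m,\zeta}f_\zeta$ is a polynomial'' back to $f$). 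This is the mechanism by which the paper obtains a new proof of Bell--Chen as a by-product; your route forgoes that. For part~(b) your argument is actually more explicit than the paper's, which descends the form of the denominator with a single ``therefore.'' Your descent hinges on the correct observation that one must specialize over a dense \emph{open} (obtained by inverting resultants, leading coefficients, etc.), not merely a dense subset: only then does Chevalley give that the image of the dominant map $\alpha_j\colon\Spec R\to\bA^1$ restricted to that open is cofinite in $\bar\Q$, so it cannot lie in the set of roots of unity. Two small remarks on your write-up: working over $\Qbar$ (as the paper does) rather than $\Q$ eliminates your first case entirely, since any specialization then fixes algebraic numbers; and even over $\Q$ the first case is immediate, since any $\Q$-algebra homomorphism carries $\alpha_j$ to a Galois conjugate and ``root of unity'' is Galois-stable, so there is no need to arrange $\sigma(\alpha_j)=\alpha_j$. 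Likewise, injectivity of $\sigma$ on $S$ is not needed — $\sigma(S)$ is finite simply because $S$ is.
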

	

	Note that the condition \eqref{eq:thm 1}
	excludes rational 
	functions
	such as 
	$\displaystyle\frac{1}{1-2x}=\sum_{n\geq 0}2^nx^n$.  In 
	fact, the coefficients
	of a rational function have the form 
	$P_1(n)\alpha_1^n+\ldots+P_k(n)\alpha_k^n$
	and the logarithmic height is comparable to $n$ (unless all the $\alpha_i$'s are
	root of unity). Our next result proves that if a power series is D-finite and 
	its coefficients ``look like'' the coefficients of a rational
	function then the series is indeed rational. In fact,
	we will consider the above form
	$P_1(n)\alpha_1^n+\ldots+P_k(n)\alpha_k^n$ in which
	the polynomials
	$P_i$ can vary according to $n$ as long as 
	their degrees are
	bounded and their coefficients
	belong to a fixed number field and 
	have small heights compared to $\log(n)$:
	
	\begin{theorem}\label{thm:new main 2}
	Let $d\in \N_0$, $k\in\N$, and 
	$\alpha_1,\ldots,\alpha_k\in \Qbar^*$. 
	Let $K$ be a number field. For $n\geq 0$, let
	$a_n$ be of the form:
	$$a_n=(c_{n,1,0}+c_{n,1,1}n+\ldots+c_{n,1,d}n^d)\alpha_1^n+\ldots+(c_{n,k,0}+c_{n,k,1}n+\ldots+c_{n,k,d}n^d)\alpha_k^n$$
	such that $c_{n,i,j}\in K$ for $1\leq i\leq k$
	and $0\leq j\leq d$, 
	and
	$\displaystyle\lim_{n\to\infty}\frac{\displaystyle\max_{i,j}h(c_{n,i,j})}{\log n}=0$.
	If $f(x)=\displaystyle\sum_{n\geq 0}a_nx^n$ is D-finite then $f$ is rational. 
	\end{theorem}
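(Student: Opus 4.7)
My plan is to exploit the D-finite recurrence for $(a_n)$ in tandem with the small-height hypothesis on the $c_{n,i,j}$ in order to show that each piece $P_i(n):=\sum_{j=0}^d c_{n,i,j}\,n^j$ is eventually a fixed polynomial in $n$. Once that is established, $a_n=\sum_i P_i(n)\,\alpha_i^n$ becomes a classical polynomial-exponential sequence (annihilated by the constant-coefficient operator $\prod_i(E-\alpha_i)^{d+1}$, where $E$ denotes the shift $n\mapsto n+1$), so $f$ is rational.

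As a preliminary reduction, I replace $f$ by an arithmetic-progression subseries $\sum_n a_{en+l}\,x^n$ for a common period $e$ of the orders of the roots of unity among the ratios $\alpha_i/\alpha_j$; this operation preserves univariate D-finiteness, and after rebinding the $c$'s and the $\alpha_i$'s, the hypothesis $h(c_{n,i,j})=o(\log n)$ is preserved. We may therefore assume no quotient $\alpha_i/\alpha_j$ with $i\neq j$ is a root of unity and (after enlarging $K$) that $K$ contains all the $\alpha_i$. D-finiteness of $f$ then yields a P-recurrence $\sum_{l=0}^s Q_l(n)\,a_{n+l}=0$ with $Q_l\in K[n]$, valid for $n$ large. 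Substituting the given form of $a_n$ and collecting by $\alpha_i^n$ produces
$$\sum_{i=1}^k \alpha_i^n\,S_i(n)\;=\;0,\qquad S_i(n):=\sum_{l=0}^s Q_l(n)\,\alpha_i^l\,P_i(n+l),$$
and elementary height estimates give $h(S_i(n))=O(\log n)=o(n)$.

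The key technical input is then a consequence of the Subspace Theorem in the style of Evertse--Schlickewei: the multiplicative non-degeneracy of the $\alpha_i$ combined with $h(S_i(n))=o(n)$ forces $S_i(n)=0$ for every $i$ and all $n$ sufficiently large. Thus each $(P_i(n))_n$ is P-recursive, so $h_i(x):=\sum_n P_i(n)\,x^n$ is D-finite over $K(x)$. Iterating inside each $h_i$---using the P-recurrence for $P_i$ together with the decomposition $P_i(n+l)=\sum_j c_{n+l,i,j}(n+l)^j$ and exploiting the Vandermonde-type non-degeneracy of $((n+s)^j)_{s,j=0}^d$ across $d+1$ consecutive shifts of the recurrence---one extracts each slice $\phi_{i,j}(x):=\sum_n c_{n,i,j}\,x^n$ as a D-finite series. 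Since $h(c_{n,i,j})=o(\log n)$, Theorem~\ref{thm:new main 1} applies to each $\phi_{i,j}$, showing it is rational and forcing $c_{n,i,j}$ to be eventually periodic; a final arithmetic-progression reduction absorbs the periods, so each $c_{n,i,j}$ becomes constant in $n$, and $f$ is rational.

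The main obstacle is the Subspace Theorem step, which demands a sufficiently precise quantitative statement for vanishing sums $\sum_i \beta_i\,\gamma_i^n=0$ in which the coefficients $\beta_i=S_i(n)$ themselves have heights growing like $o(n)$ rather than being bounded by a constant. A secondary obstacle is the extraction of the $\phi_{i,j}$'s from the single D-finite series $h_i$: since the relevant ``exponentials'' have collapsed to $1$, the Subspace Theorem no longer applies directly, and one must instead combine the Vandermonde structure of $((n+s)^j)$ with the strong height restriction $h(c_{n,i,j})=o(\log n)$ to reduce to Theorem~\ref{thm:new main 1}.
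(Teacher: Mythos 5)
Your overall skeleton is sound and tracks the paper's proof at the key structural junctures: (i) pass to a P-recurrence, (ii) plug in the form of $a_n$ to get $\sum_i \beta_{r,i}\alpha_i^r=0$ with $h(\beta_{r,i})$ subpolynomial in $r$, (iii) invoke the Evertse--Schlickewei consequence of the Subspace Theorem (Proposition~\ref{prop:general SML}) to kill the sum term by term. Your preliminary reduction to a non-degenerate tuple by passing to $a_{en+l}$ is a legitimate alternative to the paper's device of grouping the $\alpha_i$ into equivalence classes and applying Proposition~\ref{prop:general SML} to the class representatives; both work, and yours makes the subsequent bookkeeping somewhat cleaner.

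There is, however, a genuine gap at the ``Vandermonde'' step, and it is the crucial one. From $S_i(n)=\sum_{l=0}^s Q_l(n)\alpha_i^l P_i(n+l)=0$ you cannot extract the slices $\phi_{i,j}$ by a Vandermonde argument on $\big((n+s)^j\big)_{s,j}$: the quantities $c_{n+l,i,j}$ in $P_i(n+l)=\sum_j c_{n+l,i,j}(n+l)^j$ are \emph{different} unknowns for each shift $l$, not $d+1$ samples of a single polynomial, so taking $d+1$ consecutive instances of the recurrence yields a grossly underdetermined system with no Vandermonde structure to invert. What actually forces the decoupling is a height argument, not a linear-algebra one. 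Regard $S_i(n)$, for each fixed $n$, as a polynomial expression $\sum_{u=0}^{D+d}\gamma_{n,i,u}\,n^u$ whose ``coefficients'' $\gamma_{n,i,u}$ are $K$-linear combinations of the $c_{m,i,j}$ and hence have height $o(\log n)$; then Proposition~\ref{prop:height properties}(d) shows that $S_i(n)=0$ forces the top coefficient $\gamma_{n,i,D+d}$ to vanish, which is a constant-coefficient linear recurrence for $(c_{n,i,d})_n$ alone. This is exactly the trick the paper imports from the proof of Proposition~\ref{prop:1 differential eq}. Once this is in place, the rest of your plan goes through: $\phi_{i,d}$ is D-finite with $h(c_{n,i,d})=o(\log n)$, Theorem~\ref{thm:new main 1} makes it rational with cyclotomic denominator and hence $(c_{n,i,d})$ eventually periodic, and you can peel off degrees downward. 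Note, though, that this is a longer route than the paper's: the paper never separates individual $\phi_{i,j}$, and never invokes Theorem~\ref{thm:new main 1}. Instead it shows directly that $g(x)=\sum_n\big(\sum_s c_{n,s,d}\alpha_s^n\big)x^n$ satisfies $P_D\,g\in\Qbar[x]$ (hence is rational), turns this into rationality of $\tilde g(x)=\sum_n\big(\sum_s c_{n,s,d}n^d\alpha_s^n\big)x^n$ via $d$ applications of $F\mapsto x\,\partial F/\partial x$, and then inducts on $d$ using $f-\tilde g$. You should replace the Vandermonde heuristic with the height argument, and be aware that the aggregate-plus-induction route is available and shorter.
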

	
	Roughly speaking, Theorem~\ref{thm:new main 1} 
	treats D-finite
	power series in which the heights of the coefficients grow
	very slowly while Theorem~\ref{thm:new main 2} considers
	those where the coefficients are similar to those of a typical rational functions 
	(and hence $h(a_n)$ is approximately linear in $n$). 
	We now consider D-finite series in which $h(a_n)$ can be large.
	The typical example is the exponential function
	$\exp(x)=\displaystyle\sum_{n\geq 0}a_nx^n$
	with $h(a_n)=\log(n!)\sim n\log(n)$. Our next result shows that 
	the heights of the coefficients of a univariate D-finite 
	power series cannot go beyond the function
	$n\log(n)$: 
	\begin{theorem}\label{thm:new main 3}
	Let $f(x)=\displaystyle\sum_{n\geq 0} a_nx^n\in\Qbar[x]$
	be D-finite. 
	For each $n\geq 0$, we consider the affine point
	$h(a_0,\ldots,a_n)$ and its Weil height. 
	We have:
	\begin{equation}	
	\limsup_{n\to\infty}\frac{h(a_n)}{n\log n}\leq \limsup_{n\to\infty}\frac{h(a_0,\ldots,a_n)}{n\log n}<\infty.
	\end{equation}
	\end{theorem}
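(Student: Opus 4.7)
The plan is to use the standard equivalence (see e.g.~\cite{Sta80_DF}) between D-finiteness of a univariate power series and P-recursiveness of its coefficient sequence: there exist polynomials $p_0,\ldots,p_r\in\Qbar[x]$, not all zero, with $p_r\not\equiv 0$, such that
\[ p_0(n)a_n+p_1(n)a_{n+1}+\cdots+p_r(n)a_{n+r}=0\qquad\text{for every }n\geq 0. \]
Since $p_r$ has only finitely many integer zeros, a harmless shift of the index (which affects $h(a_0,\ldots,a_n)$ by only $O(1)$) lets us assume $p_r(n)\neq 0$ for every $n\geq 0$, so the recurrence yields
\[ a_{n+r}=-\frac{1}{p_r(n)}\sum_{j=0}^{r-1}p_j(n)\,a_{n+j}. \]

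Fix a number field $K$ containing every $a_n$, and set $H_n:=h(a_0,\ldots,a_n)$, which coincides with the projective height $h([1:a_0:\cdots:a_n])$. The key step is to clear the denominator $p_r(n)$ and use projective invariance: $[1:a_0:\cdots:a_{n+r}]$ equals the projective point whose homogeneous coordinates are
\[ p_r(n),\ \ p_r(n)a_0,\ \ldots,\ p_r(n)a_{n+r-1},\ \ -\sum_{j=0}^{r-1}p_j(n)a_{n+j}. \]
At each place $v$ of $K$, the maximum of the $v$-absolute values of these coordinates is bounded by $C_v\cdot\max_{0\leq j\leq r}|p_j(n)|_v\cdot\max(1,|a_0|_v,\ldots,|a_{n+r-1}|_v)$, where $C_v=r$ if $v$ is archimedean and $C_v=1$ otherwise. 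Summing with the standard weights $[K_v:\Q_v]/[K:\Q]$ then yields
\[ H_{n+r}\ \leq\ h\bigl([p_0(n):\cdots:p_r(n)]\bigr)+H_{n+r-1}+\log r. \]

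Every $p_j$ has some bounded degree $d$ with coefficients of bounded height in $K$, so an elementary computation gives $h(p_j(n))\leq d\log n+O(1)$; hence the projective height satisfies $h([p_0(n):\cdots:p_r(n)])\leq\sum_j h(p_j(n))=O(\log n)$. Iterating the displayed inequality from $n=0$ up to $N$,
\[ H_N\ \leq\ H_{r-1}+\sum_{k=0}^{N-r}\bigl(C\log k+C'\bigr)\ =\ O(N\log N), \]
which establishes $\limsup_{N\to\infty}H_N/(N\log N)<\infty$. The inequality $h(a_N)\leq H_N$ is immediate from the definition of the affine height, giving the remaining chain. The only delicate point is the bookkeeping that produces the displayed one-step inequality: one must verify that clearing denominators by $p_r(n)$ across every place of $K$ and combining the contributions of all $n+r+1$ coordinates yields an additive cost of size only $O(\log n)$ per step, rather than something that grows with the number of coordinates being tracked.
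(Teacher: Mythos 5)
Your proof is correct and follows essentially the same route as the paper's: derive the P-recursive relation from D-finiteness, then bound the one-step increment $h(a_0,\ldots,a_{n+r})-h(a_0,\ldots,a_{n+r-1})$ by $O(\log n)$ via place-by-place estimates, and iterate. The only (cosmetic) difference is that you clear the denominator $p_r(n)$ and invoke projective invariance of the height before estimating locally, whereas the paper divides through by $R_M(n)$ and works with $\log^+\lvert R_i(n)/R_M(n)\rvert_v$ directly; the two bookkeeping schemes yield the same $O(\log n)$ per-step cost.
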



	The organization of this paper is
	as follows. In the next section, 
	we give a definition of the Weil height $h$ and various
	results needed for the proofs of the above theorems. Then 
	we  
	prove 
	Theorem~\ref{thm:new main 1} and 
	present specialization arguments for 
	Corollary~\ref{cor:B-C structure}. After that, we prove 
	Theorem~\ref{thm:new main 2} and 
	Theorem~\ref{thm:new main 3}. 
	
	\textbf{Acknowledgements.}
	The first-named author is partially supported by an NSERC Discovery Grant. 
	The second-named author is partially supported by a start-up grant at the University of Calgary and an NSERC Discovery Grant.

	\section{Height}\label{sec:height}
	A large part of this section is taken from \cite{KMN} 
	which, in turn, follows 
	from earlier
	work of Evertse \cite{Eve84_OS} and
	Corvaja-Zannier \cite{CZ02_SN,CZ04_OT}. 	
  	Let $M_{\Q}=M_{\Q}^{\infty}\cup M_{\Q}^0$ where
  	$M_{\Q}^0$ is the set of $p$-adic valuations
  	and $M_{\Q}^{\infty}$
  	is the singleton consisting of the usual archimedean 
  	valuation. More generally, for every number field $K$,
  	write $M_K=M_K^\infty\cup M_K^0$ where
  	$M_K^\infty$ is the set of archimedean places and 
  	$M_K^0$ is the set of finite places. For every
  	$w\in M_K$, let $K_w$ denote the completion of 
  	$K$ with respect to $w$ and denote
  	$d(w)=[K_w:\Q_v]$ where $v$ is the restriction of
  	$w$ to $\Q$. Following \cite[Chapter~1]{BG06_HI}, 
  	for every $w\in M_K$ restricting to $v$ on $\Q$,
  	we normalize $\vert \cdot\vert_w$ as follows:
  	$$\vert x\vert_w = \vert \Norm_{K_w/\Q_v}(x) \vert_v^{1/[K:\Q]}.$$
  	Let $m\in\N$, for every vector $\bfu=(u_0,\ldots,u_m)\in K^{m+1}\setminus\{\mathbf 0\}$ and $w\in M_K$,
  	let $\vert \bfu\vert_w:=\displaystyle\max_{0\leq i\leq m} \vert u_i\vert_w$.
  	For $P \in \bP^m(\Qbar)$, let $K$ be a number
  	field such that $P$ has a representative 
  	$\bfu\in K^{m+1}\setminus\{\mathbf 0\}$
  	and define:
  	$$H(P)=\prod_{w\in M_K} \vert \bfu\vert_w.$$
  	Define $h(P)=\log (H(P))$. For $\alpha\in \Qbar$, 
  	write $H(\alpha)=H([\alpha:1])$
  	and $h(\alpha)=\log(H(\alpha))$. The following properties of
  	the height function are well-known \cite[Proposition~1.2]{Zan18_BO}:
  	\begin{proposition}\label{prop:height properties}
  	\begin{itemize}
  	\item [(a)] For every $a\in \Qbar^*$ and $m\in\Z$, $h(a^m)=\vert m\vert h(a)$.
  	\item [(b)] For every $r\in \N$ and $a_1,\ldots,a_r\in\Qbar$, $h(a_1+\ldots+a_r)\leq h(a_1)+\ldots+h(a_r)+\log r$.
  	\item [(c)] For every $a,b\in\Qbar$, $h(ab)\leq h(a)+h(b)$. Hence if $b\neq 0$ then $h(ab)\geq h(a)-h(b)$.
  	\item [(d)] Let $P(t)=a_dt^d+\ldots+a_1t+a_0\in\Qbar[t]$. There
  	exist constants $C_0(d)$ and $C_1(d)$ depending only on $d$ such that
  	if $a_d\neq 0$ then
  	$$\vert h(P(\alpha))-dh(\alpha)\vert \leq C_1(d)\max_{0\leq i\leq d}h(a_i)+C_0(d)$$
  	for every $\alpha\in\Qbar$.
  	\end{itemize}
  	\end{proposition}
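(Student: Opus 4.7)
The plan is to verify each assertion place-by-place after fixing a number field $K$ containing all relevant algebraic numbers. Throughout I will invoke two basic identities already implicit in the chosen normalization: the product formula $\prod_{w \in M_K} |x|_w = 1$ for $x \in K^\times$, and the local-degree identity $\sum_{w \mid v} d(w) = [K:\Q]$; the latter is what converts the archimedean factors of $r$ in the triangle inequality into the single term $\log r$ in (b).

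\textbf{Parts (a)--(c).} For (a), the identity $\max(|a|_w^m, 1) = \max(|a|_w, 1)^m$ for $m\ge 0$ is immediate place-by-place and summation over $w$ gives $h(a^m) = m\,h(a)$; the case $m<0$ reduces to $h(a^{-1}) = h(a)$, which follows from $\log\max(|a|_w,1) - \log\max(|a^{-1}|_w,1) = \log |a|_w$ and the product formula. For (b), at non-archimedean $w$ the ultrametric inequality gives $|a_1+\cdots+a_r|_w \le \max_i |a_i|_w$ outright, while at archimedean $w$ the usual triangle inequality produces a factor $r^{d(w)/[K:\Q]}$; summing over $w \mid \infty$ and applying the local-degree identity collapses these factors into exactly $\log r$, with the max-over-$i$ bounded by $\sum_i \log\max(|a_i|_w,1)$ to yield $\sum_i h(a_i)$. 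For (c), multiplicativity $|ab|_w = |a|_w|b|_w$ yields $\max(|ab|_w,1) \le \max(|a|_w,1)\max(|b|_w,1)$ place-by-place, hence $h(ab) \le h(a)+h(b)$; the reverse inequality for $b \ne 0$ follows by applying the forward one to $a = (ab)\cdot b^{-1}$ together with $h(b^{-1})=h(b)$.

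\textbf{Part (d).} Both directions of (d) require a careful place-by-place argument, since a naive combination of (a)--(c) gives only $\tfrac{d(d+1)}{2}\,h(\alpha)$ in place of the sharp $d\,h(\alpha)$. For the upper bound, I would exploit $|\alpha|_w^i \le \max(|\alpha|_w,1)^d$ for $0\le i \le d$ to derive $|P(\alpha)|_w \le (d+1)^{d(w)/[K:\Q]}\,\max_i |a_i|_w \cdot \max(|\alpha|_w,1)^d$; summing logs across $w$ and using the local-degree identity produces $h(P(\alpha)) \le d\,h(\alpha) + (d+1)\max_i h(a_i) + \log(d+1)$. The hypothesis $a_d \ne 0$ is used only for the matching lower bound, which is the subtlest point. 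I would homogenize to $F(X,Y) = \sum_i a_i X^i Y^{d-i}$; since $a_d \ne 0$, the assignment $[X:Y] \mapsto [F(X,Y):Y^d]$ is a finite morphism $\bP^1 \to \bP^1$ of degree $d$, and the standard height-machine estimate for morphisms of $\bP^1$ (cf.\ Hindry--Silverman, \emph{Diophantine Geometry}, Thm.~B.2.5) produces constants $C_0'(d), C_1'(d)$ with $d\,h(\alpha) - h(P(\alpha)) \le C_1'(d)\max_i h(a_i) + C_0'(d)$. An alternative elementary route applies (b) and (c) to the identity $a_d\alpha^d = P(\alpha) - \sum_{i<d} a_i\alpha^i$ and then iterates on the degree; the main obstacle in this route is keeping the resulting $C_1(d)$ polynomial in $d$ rather than factorial. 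Taking the maximum of the constants from both directions yields (d).
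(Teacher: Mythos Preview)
Your proposal is correct. The paper itself does not give a proof at all: it merely cites the standard references (Hindry--Silverman, Bombieri--Gubler for (a)--(c), and \cite[Proposition~6]{HS11} together with \cite[Remark~B.2.7]{HS00_DG} for (d)), whereas you have sketched the actual place-by-place arguments that those references contain. Your treatment of (a)--(c) is exactly the standard one, and for (d) your appeal to the height machine for finite morphisms $\bP^1\to\bP^1$ (Hindry--Silverman, Thm.~B.2.5) is essentially the same source the paper points to via Remark~B.2.7. So there is no substantive divergence; you have simply unpacked what the paper left as a citation.
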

 	\begin{proof}
 	Parts (a), (b), and (c) are in any standard introduction to
 	Weil heights such as \cite[Part~B]{HS00_DG} or \cite[Chapters~1--2]{BG06_HI}. For part (d), see \cite[Proposition~6]{HS11_AQ} and
 	\cite[Remark~B.2.7]{HS00_DG}.
 	\end{proof}

  	Now we present an important application  of the
  	Subspace Theorem taken from \cite[Section~2]{KMN}. The Subspace
  	Theorem is one of the milestones of diophantine geometry in
  	the last 50 years. The first version was obtained by 
  	Schmidt \cite{Sch70_SA} and further versions were obtained
  	by Schlickewei and Evertse \cite{Sch90_TQ,Eve96_AI,ES02_AQ}.
 	In the following application, a sublinear function means
 	a function $F:\ \N\rightarrow (0,\infty)$
 	such that 
 	$\displaystyle\lim_{n\to\infty}\frac{F(n)}{n}=0$. 
 	Let $k\in\N$, a tuple of non-zero algebraic 
 	numbers $(\alpha_1,\ldots,\alpha_k)$ is said to
 	be non-degenerate
 	if $\alpha_i/\alpha_j$ is not a root of unity
 	for $i\neq j$. We have:
 	
 	\begin{proposition}\label{prop:general SML}
  	Let $k\in\N$, let $(\alpha_1,\ldots,\alpha_k)$ be a non-degenerate 
  	tuple of non-zero algebraic numbers, let $F$ be a sublinear function, and let $K$ be a number field. Then there are 
  	only finitely many tuples $(n,b_1,\ldots,b_k)\in \N\times (K^*)^k$ satisfying:
  	$$b_1\alpha_1^n+\ldots+b_k\alpha_k^n=0\ \text{and}\ \max_{1\leq i\leq k}h(b_i)<f(n).$$
  	\end{proposition}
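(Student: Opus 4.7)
The plan is to apply Schmidt's Subspace Theorem combined with induction on $k$. For each fixed $n$, Northcott's theorem shows only finitely many tuples $(b_1,\ldots,b_k)\in(K^*)^k$ of height below $F(n)$ exist, so it suffices to bound the admissible values of $n$. The cases $k=1$ (impossible) and $k=2$ are direct: $b_1\alpha_1^n + b_2\alpha_2^n = 0$ rearranges to $(\alpha_1/\alpha_2)^n = -b_2/b_1$, and Proposition~\ref{prop:height properties} yields $n\,h(\alpha_1/\alpha_2) \le 2F(n)$; since $\alpha_1/\alpha_2$ is not a root of unity the height on the left is strictly positive, and the sublinearity of $F$ bounds $n$.

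For the inductive step, assuming the result for strictly fewer than $k$ terms, I would enlarge $K$ to contain all $\alpha_i$ and suppose for contradiction that there is an infinite family $(n_j, b^{(j)})$ of solutions with $n_j\to\infty$. If infinitely many $j$ admit some proper sub-sum $\sum_{i\in I_j} b_i^{(j)}\alpha_i^{n_j} = 0$, then by pigeonhole one may fix $I\subsetneq \{1,\ldots,k\}$ and apply the inductive hypothesis to the sub-tuple $(\alpha_i)_{i\in I}$ (which is still non-degenerate), obtaining a contradiction. Otherwise we are in the \emph{non-degenerate case}: for all large $j$, no proper sub-sum vanishes.

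In this non-degenerate case, I would apply the Subspace Theorem to the points $\mathbf{z}_j := (b_1^{(j)}\alpha_1^{n_j},\ldots,b_{k-1}^{(j)}\alpha_{k-1}^{n_j}) \in \bP^{k-2}(K)$ with linearly independent forms $L_i = z_i$ for $i=1,\ldots,k-2$ and $L_{k-1} = z_1+\cdots+z_{k-1}$; the original equation gives $L_{k-1}(\mathbf{z}_j) = -b_k^{(j)}\alpha_k^{n_j}$, so each $|L_i(\mathbf{z}_j)|_v$ factors as a product of a $b$-value and an $\alpha$-power. Choosing a finite set $S$ of places containing the archimedean ones and every finite place at which some $\alpha_i$ is non-integral makes each $\alpha_i$ an $S$-unit, so $\prod_{v\in S}|\alpha_i^{n_j}|_v = 1$ by the product formula. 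A short computation then shows $\prod_{v\in S}\prod_i |L_i(\mathbf{z}_j)|_v \le e^{O(F(n_j))}$, whereas Proposition~\ref{prop:height properties}(a,c) gives $h(\mathbf{z}_j) \ge h(z_1/z_2) \ge n_j\,h(\alpha_1/\alpha_2) - 2F(n_j)$, which grows linearly in $n_j$ by non-degeneracy. Hence for all large $j$ and a suitably small $\epsilon>0$, the Subspace Theorem hypothesis is satisfied.

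The Subspace Theorem then produces a fixed proper subspace of $\bP^{k-2}$ containing infinitely many $\mathbf{z}_j$, equivalently a non-trivial fixed relation $\sum_{i<k} c_i b_i^{(j)}\alpha_i^{n_j} = 0$ valid for infinitely many $j$. Combining this with the original equation by eliminating one term $b_{j_0}\alpha_{j_0}^{n_j}$ with $c_{j_0}\ne 0$ yields a shorter relation involving at most $k-1$ non-zero terms, whose coefficients have heights $\le F(n_j)+O(1)$ (still sublinear) and whose surviving sub-tuple of $\alpha_i$'s remains non-degenerate; the Case B hypothesis prevents collapse to a tautology, since if $c_i = c_{j_0}$ for every $i<k$ the relation would force $\sum_{i<k} b_i^{(j)}\alpha_i^{n_j}=0$, a proper sub-sum vanishing. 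By the inductive hypothesis only finitely many $n_j$ satisfy this shorter relation, contradicting $n_j\to\infty$. The main obstacle is the bookkeeping in the Subspace Theorem step: balancing the sub-exponential growth of $\prod_{v\in S}\prod_i |L_i(\mathbf{z}_j)|_v$ (coming from the sublinear $b_i^{(j)}$-heights) against the exponential growth of $H(\mathbf{z}_j)$ (coming from the $\alpha$-powers) requires careful use of the product formula together with the $S$-unit structure of the $\alpha_i$.
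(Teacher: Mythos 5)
Your proof is correct in outline, and it essentially reconstructs the argument behind the reference the paper itself invokes: the paper simply cites Evertse's Theorem 1 (itself a consequence of the Subspace Theorem) and points to Section 2 of the KMN preprint for the details, so there is no independent proof in the paper to compare against. Your inductive Subspace-Theorem scheme — splitting into the vanishing-subsum case versus the nondegenerate case, using that the $\alpha_i$ are $S$-units so that $\prod_{v\in S}\prod_i|L_i(\mathbf z_j)|_v$ is only of size $e^{O(F(n_j))}$ while $H(\mathbf z_j)$ grows like $e^{c n_j}$, and then descending via the resulting fixed hyperplane — is exactly the standard route and is in the spirit of what the cited sources do.

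Two small remarks on streamlining. First, once the Subspace Theorem hands you a fixed hyperplane $\sum_{i<k}c_i z_i=0$ through infinitely many $\mathbf z_j$, you can skip the elimination against the original equation: the relation $\sum_{i<k}c_i b_i^{(j)}\alpha_i^{n_j}=0$ already involves at most $k-1$ of the $\alpha_i$'s, at least two of the $c_i$ must be nonzero (a single nonzero term cannot vanish), the sub-tuple remains nondegenerate, and the coefficients $c_i b_i^{(j)}$ have height $\le h(c_i)+F(n_j)$, still sublinear; so the inductive hypothesis applies directly. The elimination step you describe also works, but your ``collapse to a tautology'' worry is a non-issue: if all $c_i$ were equal the new relation together with the original would force $b_k^{(j)}\alpha_k^{n_j}=0$, which is impossible. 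Second, the bookkeeping you flag as the ``main obstacle'' does go through exactly as you suspect: for $v\notin S$ one has $|\mathbf z_j|_v=\max_i|b_i^{(j)}|_v$, so $\prod_{v\notin S}|\mathbf z_j|_v\le e^{(k-1)F(n_j)}$, hence $\prod_{v\in S}|\mathbf z_j|_v\ge H(\mathbf z_j)e^{-(k-1)F(n_j)}$, and comparing with $\prod_{v\in S}\prod_i|L_i(\mathbf z_j)|_v\le e^{(k-1)F(n_j)}$ gives the Subspace Theorem hypothesis with any fixed $\epsilon>0$ once $n_j$ is large, since $F$ is sublinear and $h(\alpha_1/\alpha_2)>0$ by Kronecker.
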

  	\begin{proof}
  	 This follows from a result of Evertse 
  	 \cite[Theorem~1]{Eve84_OS}.
  	 For more details, see \cite[Section~2]{KMN}.
  	\end{proof}
 	

	\section{Proofs of Theorem~\ref{thm:new main 1} and Corollary~\ref{cor:B-C structure}}\label{sec:proofs 1}
	
	We will refer to the following property of power series with
	algebraic coefficients throughout the paper:
	\begin{definition}
		Let $m\in\N$, $\bfx=(x_1,\ldots,x_m)$, and
		$f(\bfx)=\sum_{\bfn\in\N_0^m}a_{\bfn}\bfx^\bfn$.
		We say that $f$ satisfies property $\cP$ if:
		$$\lim_{\Vert\bfn\Vert\to\infty}\frac{h(a_{\bfn})}{\log\Vert\bfn\Vert}=0.$$	
	\end{definition}
	
	For the rest of this section, let $m\in\N$ and $\bfx=(x_1,\ldots,x_m)$.	
	The proof of Theorem~\ref{thm:new main 1} consists
	of three parts. The first part is to use properties of
	the Weil height to 
	establish rationality of $f$. The key idea
	is that the coefficients of $f$ satisfy certain
	linear recurrence relations with polynomial coefficients
	and the property $\cP$
	allows the polynomial coefficients
	to be the dominant terms in such relations. 
	In fact we will prove an effective version of part (a) 
	Theorem~\ref{thm:new main 1} which will be used in 
	the specialization arguments
	for the proof of Corollary~\ref{cor:B-C structure}.
	The second part of the proof is to prove part (b)
	by using the substitution 
	$(x_1,\ldots,x_m)=(t^{u_1},\ldots,t^{u_m})$
	for $u_1,\ldots,u_m\in\N$ in order to apply known
	results about univariate rational functions; 
	it turns out that this part has a surprising
	connection to the beautiful Manin-Mumford conjecture
	for tori in diophantine geometry.
	Finally, once we know that $f$ is a rational function
	whose denominator has the special form
	given in part (b), we can use induction and certain
	combinatorial arguments to finish the proof. We start with the
	following simple lemma:
	\begin{lemma}\label{lem:Lipshitz}
	Let $K$ be a field of characteristic $0$ and let
	$f(\bfx)=\sum_{\bfn\in\N_0^m}a_{\bfn}\bfx^{\bfn}[[\bfx]]$. 
	We have:
	\begin{itemize}
	\item [(a)] $f$ is D-finite over $K(\bfx)$ if and only if 
	$f$ satisfies a system of linear partial differential equations,
	one for each $i=1,\ldots,m$, of the form:
	$$\left(P_{i,d_i}(\bfx)\left(\frac{\partial}{\partial x_i}\right)^{d_i}+\ldots+P_{i,1}(\bfx)\frac{\partial}{\partial x_i}+P_{i,0}(\bfx)\right)f(\bfx)=0$$
	where $P_{i,j}(\bfx)\in K[\bfx]$ for every
	$0\leq j\leq d_i$ and $P_{i,d_i}(\bfx)\neq 0$.
	\item [(b)] Let $F$ be a field containing $K$. Then $f$ is
	D-finite over $F(\bfx)$ if and only if it is D-finite
	over $K(\bfx)$. 
	\end{itemize}
	\end{lemma}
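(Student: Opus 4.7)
The plan is to prove part (a) in both directions, and then use part (a) as the main tool for (b). For part (a), the ``only if'' direction is immediate: since the $K(\bfx)$-span of all partial derivatives of $f$ is finite dimensional, for each $i$ the sequence $f,\,\partial f/\partial x_i,\,(\partial/\partial x_i)^2 f,\ldots$ is eventually $K(\bfx)$-linearly dependent, and clearing denominators in a minimal such relation yields a PDE of the required form.

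The substantive direction is the converse of (a). Given the system of PDEs, my plan is to introduce the candidate subspace
$$
W := \sum_{\substack{0\leq j_l < d_l \\ l=1,\ldots,m}} K(\bfx)\cdot \frac{\partial^{\Vert\bfj\Vert} f}{\partial \bfx^{\bfj}},
$$
which has $K(\bfx)$-dimension at most $d_1\cdots d_m$, and to show $W$ is stable under every derivation $\partial/\partial x_i$. For a generator with $j_i<d_i-1$, applying $\partial/\partial x_i$ yields another generator. When $j_i=d_i-1$, I would invoke the $i$-th PDE to rewrite $(\partial/\partial x_i)^{d_i} f$ as a $K(\bfx)$-linear combination of $f,\partial f/\partial x_i,\ldots,(\partial/\partial x_i)^{d_i-1} f$, then apply $\prod_{l\neq i}(\partial/\partial x_l)^{j_l}$ to both sides and expand via the Leibniz rule. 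The key observation is that Leibniz expansion only lowers the non-$i$ coordinates of the multi-index, so every resulting mixed partial still lies in the generating set of $W$. Since the derivation of an arbitrary element of $W$ also produces lower-order corrections with $K(\bfx)$ coefficients, $W$ is indeed $\partial/\partial x_i$-stable, and a routine induction on $\Vert\bfn\Vert$ then shows every partial derivative of $f$ lies in $W$, establishing D-finiteness.

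For part (b), the direction ``D-finite over $K(\bfx)$ implies D-finite over $F(\bfx)$'' is immediate by extension of scalars. For the converse, I would apply part (a) over $F(\bfx)$ to obtain, for each $i$, a PDE $\sum_{j=0}^{d_i} P_{i,j}(\bfx)(\partial/\partial x_i)^j f = 0$ with $P_{i,j}\in F[\bfx]$ not all zero. Fixing a degree bound on the $P_{i,j}$, the existence of such a relation amounts to the nontriviality of the solution set of a homogeneous linear system whose unknowns are the coefficients of the $P_{i,j}$ and whose coefficient matrix has entries in $K$, being built from the ($K$-valued) coefficients of $f$ and its derivatives. Because the $F$-solution space equals $F\otimes_K$(solution space over $K$), nontriviality over $F$ forces nontriviality over $K$, which produces PDEs with coefficients in $K[\bfx]$ and hence D-finiteness over $K(\bfx)$. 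The main obstacle throughout is the bookkeeping in part (a) that ensures no intermediate mixed partial breaches a coordinate bound; this is resolved by the Leibniz monotonicity observation noted above.
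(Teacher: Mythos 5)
Your proposal is correct, and it is worth noting where it parallels the paper and where it does more. The paper simply cites Lipshitz (\cite[Proposition~2.2]{Lip89_DF}) for part (a), with the remark that the argument for $\C$ carries over verbatim to any characteristic-$0$ field; you instead supply a self-contained proof via the closure construction: the ``only if'' direction by finite-dimensionality of the span of $f,\partial f/\partial x_i,\ldots$ for each $i$, and the converse by exhibiting the space $W$ spanned by the boxed-in mixed partials $\partial^{\bfj}f$ with $0\leq j_l<d_l$ and checking $\partial/\partial x_i$-stability via the $i$-th PDE and the observation that the Leibniz expansion only decreases the non-$i$ coordinates. That bookkeeping is exactly right: after substituting the PDE for $(\partial/\partial x_i)^{d_i}f$ and applying $\prod_{l\neq i}(\partial/\partial x_l)^{j_l}$, every mixed partial that appears has $i$-th index $<d_i$ and $l$-th index $\leq j_l<d_l$, so it is again a generator of $W$; once $W$ is stable under all $\partial/\partial x_i$ and contains $f$, no further induction is really needed. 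Your part (b) is the same argument the paper gives: fix a degree bound, regard the coefficients of the $P_{i,j}$ as unknowns of a homogeneous linear system whose entries come from the $a_{\bfn}\in K$, and observe that the finite-dimensional solution space over $F$ is $F\otimes_K$ the solution space over $K$, so nontriviality over $F$ forces nontriviality over $K$. One small omission: you should note the degenerate case $d_i=0$, where the $i$-th PDE reads $P_{i,0}f=0$ with $P_{i,0}\neq 0$, forcing $f=0$ (as $K[[\bfx]]$ is an integral domain) and making the claim trivial; otherwise all $d_i\geq 1$ and $f\in W$ as you implicitly assume. In sum, same route for (b), and a fleshed-out version of the cited result for (a) — a net gain in self-containment.
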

	\begin{proof}
	Part (a) is \cite[Proposition~2.2]{Lip89_DF}; although the author
	stated it for $\C$, the proof works verbatim for
	an arbitrary field $K$ of characteristic $0$. For part (b),
	if $f$ is D-finite over $F(\bfx)$ then the coefficients
	of the $P_{i,j}$'s give a non-trivial solution
	over $F$ of a homogeneous system of (infinitely many)
	linear equations with coefficients in $K$. Hence this system
	must have a non-trivial solution over $K$ and this proves
	D-finiteness over $K(\bfx)$. 
	\end{proof}

	\subsection{Proof of part (a) of Theorem~\ref{thm:new main 1}}\label{subsec:Proof a}
	We now prove the following effective version
	of part (a) of Theorem~\ref{thm:new main 1}: 
	\begin{theorem}\label{thm:main 1 effective}
	Let $f(\bfx)=\displaystyle\sum_{\bfn\in\N_0^m}a_{\bfn}\bfx^{\bfn}\in \Qbar[[\bfx]]$ be D-finite. Assume that $f$ satisfies a system of linear partial differential equations,
	one for each $i=1,\ldots,m$, of the form:
	$$\left(P_{i,d_i}(\bfx)\left(\frac{\partial}{\partial x_i}\right)^{d_i}+\ldots+P_{i,1}(\bfx)\frac{\partial}{\partial x_i}+P_{i,0}(\bfx)\right)f(\bfx)=0$$
	where $P_{i,j}(\bfx)\in \Qbar[\bfx]$ for every
	$0\leq j\leq d_i$ and $P_{i,d_i}(\bfx)\neq 0$.
	Let $M$ be an upper bound on the heights of the coefficients
	and let $D$ be an upper bound on the total 
	degrees of all the $P_{i,j}$. Then there exist  
	effectively computable
	positive 
	constant $\delta$
	and $\eta$
	 depending only on 
	$m$, $M$, $D$, and $\displaystyle\max_{1\leq i\leq m}d_i$
	such that the following holds.
	If $N$ satisfies
	$\displaystyle\frac{h(a_{\bfn})}{\log \Vert \bfn\Vert}<\delta$
	for every $\bfn\in\N_0^m$ with 
	$\Vert\bfn\Vert\geq N$
	then $P_{1,n_1}\ldots P_{m,n_m}f$
	is a polynomial of total degree at most
	$N+\eta$.
	\end{theorem}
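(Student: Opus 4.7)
The plan is to extract, from each of the $m$ partial differential equations, a linear recurrence with polynomial-in-$\bfn$ coefficients on the $a_{\bfn}$, and then use the effective form of property $\cP$ (with threshold $\delta$) to force the $\bfx^\bfm$-coefficients of $g:=P_{1,d_1}\cdots P_{m,d_m}f$ to vanish for $\Vert\bfm\Vert>N+\eta$. (I read the product $P_{1,n_1}\cdots P_{m,n_m}$ in the statement as $P_{1,d_1}\cdots P_{m,d_m}$, taking the $n_i$'s to be a typographical slip for $d_i$'s.)

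First, writing $P_{i,j}(\bfx)=\sum_{\bfk}p_{i,j,\bfk}\bfx^\bfk$ and matching $\bfx^\bfn$-coefficients in the $i$-th PDE gives
$$\sum_{j=0}^{d_i}\sum_{\bfk}p_{i,j,\bfk}\,(n_i-k_i+j)(n_i-k_i+j-1)\cdots(n_i-k_i+1)\,a_{\bfn+j\bfe_i-\bfk}=0.$$
The terms with $j=d_i$, $k_i=0$ are those of maximal $i$-shift $d_i$ and share the common factor $(n_i+d_i)(n_i+d_i-1)\cdots(n_i+1)$. Rearranging, this expresses one $\Qbar$-linear combination of top-$i$-shift values of $a$ in terms of polynomial-coefficient combinations of lower-$i$-shift values, with all polynomial coefficients of total degree at most $\max_id_i$ and coefficient-heights bounded by $M$.

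Applying Proposition~\ref{prop:height properties}(b),(c),(d), each such polynomial coefficient has height at most $C\log\Vert\bfn\Vert+C'$ on integer arguments, with $C,C'$ effective in $D$, $\max_id_i$, and $M$. Combined with the hypothesis $h(a_{\bfn'})<\delta\log\Vert\bfn'\Vert$ for $\Vert\bfn'\Vert\geq N$, writing the $\bfx^\bfm$-coefficient $b_\bfm$ of $g$ as a finite $\Qbar$-linear combination of the $a_{\bfn'}$'s via the convolution formula, and iterating the recurrence, one arrives at an upper bound of the form
$$h(b_\bfm)\leq K(\delta,m,M,D,\max_id_i)\,\log\Vert\bfm\Vert,$$
with $K\to 0$ as $\delta\to 0$, valid for $\Vert\bfm\Vert\geq N+c$ with an absolute $c$.

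The decisive step—and the main obstacle I anticipate—is to convert this height upper bound on $b_\bfm$ into outright vanishing. The mechanism is to iterate the recurrence in the $i$-direction: each application forces an additional factor $(n_i+d_i)(n_i+d_i-1)\cdots(n_i+1)$, of height of order $d_i\log n_i$, to accumulate on one side, a contribution that cannot be absorbed by the sublogarithmic bound on the right-hand-side $a_{\bfn'}$'s (and hence on $b_\bfm$) unless the relevant linear combination vanishes identically. Choosing $\delta$ small enough in terms of $m$, $M$, $D$, $\max_id_i$ makes this accounting tight, and tracking the number of iterations needed yields the effective $\eta$. Thus $b_\bfm=0$ for all $\Vert\bfm\Vert>N+\eta$, which establishes that $g=P_{1,d_1}\cdots P_{m,d_m}f$ is a polynomial of total degree at most $N+\eta$, as required.
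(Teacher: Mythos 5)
Your setup --- extracting a polynomial-coefficient recurrence for the $a_{\bfn}$ from each PDE and bounding the heights of the auxiliary coefficients by $O(\log\Vert\bfr\Vert)$ via Proposition~\ref{prop:height properties} --- agrees with the paper. The gap is exactly where you flag it: the conversion of a height bound into vanishing. The proposed ``iteration'' mechanism does not give a contradiction. Dividing a recurrence through by the factor $B_{d_i}(r_i+d_i)=(r_i+d_i)\cdots(r_i+1)$, or accumulating such factors over several iterations, only produces an \emph{upper} bound on heights (via $h(a/b)\le h(a)+h(b)$); it never forces a quantity to be zero, and it is perfectly consistent with $h(a_{\bfn})=o(\log\Vert\bfn\Vert)$. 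Likewise the intermediate bound $h(b_{\bfm})\le K\log\Vert\bfm\Vert$ is a dead end: no upper bound on a height, however small, implies vanishing.

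What the paper does instead is a single-step root-height argument. Writing the $\bfx^{\bfr}$-coefficient of the $i$-th PDE as a polynomial \emph{in the variable} $X$ by collecting powers of $r_i$ inside each $B_j(X+j-n_i)$,
$$\alpha_{d_i}X^{d_i}+\alpha_{d_i-1}X^{d_i-1}+\cdots+\alpha_0,$$
one notes that this polynomial vanishes at the integer $X=r_i$ and that every $\alpha_t$ is a fixed-length $K$-combination of nearby $a_{\bfn}$'s, hence has height $\lesssim\delta\log\Vert\bfr\Vert$. Proposition~\ref{prop:height properties}(d) (a Mahler-type bound on roots in terms of coefficient heights) then forces $d_i\log r_i\le C_5\delta\log\Vert\bfr\Vert+C_6$ whenever $\alpha_{d_i}\ne 0$, which fails once $\delta$ is small, $r_i\ge\Vert\bfr\Vert/(2m)$, and $\Vert\bfr\Vert$ is large. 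So $\alpha_{d_i}=0$. Two further points: (1) the quantity you should want to kill is $\alpha_{d_i}=\sum_{\bfn\in S_{i,d_i}}p_{i,d_i,\bfn}a_{\bfr+d_i\bfe_i-\bfn}$, the coefficient of $\bfx^{\bfr+d_i\bfe_i}$ in $P_{i,d_i}f$, which is the coefficient of $X^{d_i}$ after collecting powers --- \emph{not} the ``top-$i$-shift'' combination you single out (terms with $j=d_i$, $k_i=0$), which only gives the subsum over $\bfk$ with $k_i=0$; and (2) no iteration is needed or used --- the argument is one application of the root-height inequality per multi-index $\bfr$, followed by the observation (in the passage from Proposition~\ref{prop:1 differential eq} to Theorem~\ref{thm:main 1 effective}) that for each $\bfr$ one can choose $i$ with $r_i\ge\Vert\bfr\Vert/m$, and that multiplying by the remaining $P_{j,d_j}$'s only shifts supports by at most $(m-1)D$.
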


	For $1\leq i\leq m$, let $\bfe_{i}=(0,\ldots,0,1,0,\ldots,0)$ be the 
	$i$-th elementary basis vector in $\N_0^m$. For every
	$j\in \N$, let $B_j(x)=x(x-1)\ldots (x-j+1)\in \Z[x]$
	and let $B_0(x)=1$. So we have:
	$$\left(\frac{\partial}{\partial x_i}\right)^{j}f(\bfx)=
	\sum_{\bfn=(n_1,\ldots,n_m)\in\N_0^m}B_j(n_i)a_{\bfn}\bfx^{\bfn-j\bfe_i}.$$
	To prove Theorem~\ref{thm:main 1 effective}, we prove
	the following result that handles one linear partial
	differential equation at a time:
	\begin{proposition}\label{prop:1 differential eq}
	Let $f(\bfx)=\displaystyle\sum_{\bfn\in\N_0^m}a_{\bfn}\bfx^{\bfn}\in \Qbar[[\bfx]]$ and fix $i\in\{1,\ldots,m\}$. Assume that $f$ satisfies the linear partial differential equation:
	\begin{equation}\label{eq:1 differential eq}
	\left(P_{i,d_i}(\bfx)\left(\frac{\partial}{\partial x_i}\right)^{d_i}+\ldots+P_{i,1}(\bfx)\frac{\partial}{\partial x_i}+P_{i,0}(\bfx)\right)f(\bfx)=0
	\end{equation}
	where $P_{i,j}(\bfx)\in \Qbar[\bfx]$ for every
	$0\leq j\leq d_i$ and $P_{i,d_i}(\bfx)\neq 0$.
	Let $M_i$ be an upper bound on the height of the coefficients
	and let
	$D_i$ be an upper bound on the total degrees of
	the $P_{i,j}$'s for $0\leq j\leq d_i$.
	Let $\epsilon_i>0$, then there exist effectively computable
   positive constants $\delta_i$
   and $\eta_i$ depending only on $m$, $M_i$, $D_i$, $d_i$, and
   $\epsilon_i$  
    such that the following holds.
   If $N$ satisfies 
   $\displaystyle\frac{h(a_{\bfn})}{\log\Vert\bfn\Vert}<\delta_i$
   for every $\bfn\in\N_0^m$ with $\Vert \bfn\Vert\geq N$
   then for every $\bfr=(r_1,\ldots,r_m)\in\N_0$,
   if $\Vert \bfr\Vert \geq N+\eta_i$ and 
   $r_i\geq \epsilon_i\Vert \bfr\Vert$ 
   then the coefficient of $\bfx^{\bfr+d_i\bfe_i}$
   in $P_{i,d_i}f$ is $0$.  
	\end{proposition}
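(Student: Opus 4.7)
The plan is to read off the single recursion obtained by matching coefficients of $\bfx^\bfr$ in the PDE \eqref{eq:1 differential eq} and to isolate from it the vanishing we want. Writing $P_{i,j}(\bfx) = \sum_\bfk p_{i,j,\bfk}\bfx^\bfk$ and using $(\partial/\partial x_i)^j f = \sum_\bfn B_j(n_i)\,a_\bfn\,\bfx^{\bfn-j\bfe_i}$, the coefficient of $\bfx^\bfr$ in \eqref{eq:1 differential eq} becomes
\begin{equation*}
\sum_{j=0}^{d_i}\sum_\bfk p_{i,j,\bfk}\,B_j(r_i+j-k_i)\,a_{\bfr+j\bfe_i-\bfk}=0.
\end{equation*}
Since $B_j(X)=X(X-1)\cdots(X-j+1)$ has degree exactly $j$ with leading coefficient $1$, expanding each factor $B_j(r_i+j-k_i)$ in powers of $r_i$ shows that the unique contribution of degree $d_i$ in $r_i$ comes from $j=d_i$ and equals $r_i^{d_i}\,A$, where
$$A := \sum_\bfk p_{i,d_i,\bfk}\,a_{\bfr+d_i\bfe_i-\bfk}$$
is precisely the coefficient of $\bfx^{\bfr+d_i\bfe_i}$ in $P_{i,d_i}f$ that we wish to show vanishes. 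The remainder $\Xi$ is a sum of $O_{d_i,D_i,m}(1)$ terms of the form $\beta(r_i)\,a_\bfn$, with $\beta$ a polynomial in $r_i$ of degree $\leq d_i-1$ whose coefficients are bounded integers times some $p_{i,j,\bfk}$ of height $\leq M_i$, and with $\Vert\bfn\Vert \leq \Vert\bfr\Vert + d_i$. The identity reads $r_i^{d_i}A + \Xi = 0$.

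Heights then force $A=0$. By Proposition~\ref{prop:height properties}, the hypothesis $h(a_\bfn)<\delta_i\log\Vert\bfn\Vert$ for $\Vert\bfn\Vert\geq N$, together with the constraint $r_i \geq \epsilon_i\Vert\bfr\Vert$ (which yields $\log\Vert\bfn\Vert \leq \log r_i + O_{\epsilon_i}(1)$), delivers
$$h(\Xi) \leq (d_i-1+\delta_i)\log r_i + C_1,\qquad h(A) \leq \delta_i\log r_i + C_2,$$
with $C_1,C_2$ effective in $m,M_i,D_i,d_i,\epsilon_i$. On the other hand, if $A\neq 0$, applying $h(xy)\geq h(x)-h(y)$ with $x=r_i^{d_i}$, $y=A$, and using $h(r_i^{d_i})=d_i\log r_i$ since $r_i\in\N$, gives
$$d_i\log r_i - h(A) \leq h(r_i^{d_i}A) = h(\Xi) \leq (d_i-1+\delta_i)\log r_i + C_1,$$
so $h(A) \geq (1-\delta_i)\log r_i - C_1$. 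Combining this with the direct upper bound on $h(A)$ produces $(1-2\delta_i)\log r_i \leq C_1+C_2$. Taking for instance $\delta_i = 1/4$ and choosing $\eta_i$ large enough that $r_i \geq \epsilon_i(N+\eta_i) \geq \epsilon_i\eta_i$ exceeds the resulting threshold forces a contradiction; hence $A=0$, as required.

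The argument is mostly mechanical once the right move is found, but the step that demands care is the explicit expansion isolating the $r_i^{d_i}$-coefficient: one must verify that no contribution to $r_i^{d_i}$ arises outside $j=d_i$ (immediate from $\deg B_j=j$) and that each polynomial factor $\beta(r_i)$ in $\Xi$ has coefficients whose Weil height is controlled uniformly in $\bfr$ by $m,M_i,D_i,d_i$ alone. Once these bookkeeping bounds are made explicit, the constants $C_1$, $C_2$, and hence $\delta_i$ and $\eta_i$, are effectively computable in $m,M_i,D_i,d_i,\epsilon_i$, and the rest of the proof is a direct application of Proposition~\ref{prop:height properties}.
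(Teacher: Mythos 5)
Your proposal is correct and follows essentially the same route as the paper: match coefficients of $\bfx^{\bfr}$ in the PDE, observe that the degree-$d_i$ piece in $r_i$ of the resulting identity is $r_i^{d_i}$ times exactly the coefficient $A$ of $\bfx^{\bfr+d_i\bfe_i}$ in $P_{i,d_i}f$, and conclude $A=0$ from height bounds; the paper reaches the same contradiction by invoking Proposition~\ref{prop:height properties}(d) as a black box while you unpack it via $h(xy)\geq h(x)-h(y)$. One small imprecision: the coefficient of $\log r_i$ in your bound for $h(\Xi)$ is really $(d_i-1)+c\,\delta_i$ with $c$ an effective constant depending on $m,D_i,d_i$ (since $\Xi$ aggregates $O_{m,D_i,d_i}(1)$ terms each contributing $O(\delta_i\log r_i)$ to the height), so the concluding choice ``$\delta_i=1/4$'' should be replaced by ``$\delta_i$ small in terms of $m,D_i,d_i$''; the proposition explicitly permits this dependence, so the argument is otherwise sound.
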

	\begin{proof}
	If $d_i=0$ then $P_{i,0}f=0$ and there is nothing to prove,
	so we may assume $d_i>0$. 
	For $0\leq j\leq d_i$, let $S_{i,j}\subset\N_0^m$ be the ``support'' of
	$P_{i,j}$; this means the finite set of the 
	multi-degrees of
	monomials having non-constant coefficients in $P_{i,j}$. 
	For $0\leq j\leq d_i$, write:
	$$P_{i,j}(\bfx)=\sum_{\bfn\in S_{i,j}}p_{i,j,\bfn}\bfx^{\bfn}.$$
	
	Let $\bfr=(r_1,\ldots,r_m)\in\N_0^m$, the coefficient of
	$\bfx^\bfr$ in the left-hand side of 
	\eqref{eq:1 differential eq} is: 
	\begin{equation}\label{eq:1 differential eq lhs}
	\sum_{j=0}^{d_i}\sum_{\bfn=(n_1,\ldots,n_m)\in S_{i,j}}p_{i,j,\bfn}B_j(r_i+j-n_i)a_{\bfr+j\bfe_i-\bfn}=0;
	\end{equation}  
	note that our convention here is to put
	$a_{\bfu}=0$ if $\bfu\in \Z^{m}\setminus\N_0^m$.  
	Since $\Vert\bfn\Vert\leq D_i$ for every $\bfn\in S_{i,j}$,
	there exists a constant $C_2$  depending only
	on $d_i$ and $D_i$ such that 
	for every $0\leq j\leq d_i$ and every
	$\bfn=(n_1,\ldots,n_m)\in S_{i,j}$, 
	$B_j(r_i+j-n_i)$ is a polynomial of 
	degree $j$ in $r_i$ and the heights of its coefficients  
	are bounded above by $C_2$.  
	
	Now assume that $\Vert \bfr\Vert\geq\max\{N+d_i+D_i,2\}$ so that 
	for every
	$0\leq j\leq d_i$ and every $\bfn\in S_{i,j}$, the vector $\bfr+j\bfe_i-\bfn$
	is either in $\Z^m\setminus\N_0^m$ or the sum of its coordinates
	is at least $N+d_i$ and we have:
	$$h(a_{\bfr+j\bfe_i-\bfn})\leq \delta_i\log(\Vert \bfr\Vert+j-\Vert\bfn\Vert)\leq \delta_i\log(2\Vert \bfr\Vert)\leq 2\delta_i\log\Vert\bfr\Vert.$$ 
	Observe that the cardinality of each $S_{i,j}$
	is at most $(D_i+1)^m$. By gathering the coefficients of
	common powers of $r_i$ and using
	Proposition~\ref{prop:height properties}, 
	we can write the left-hand side
	of \eqref{eq:1 differential eq lhs} as:
	\begin{equation}\label{eq:1 differential eq alpha}
	\alpha_{d_i}r_i^{d_i}+\alpha_{d_i-1}r_i^{d_i-1}+\ldots+\alpha_0=0
	\end{equation}
	where
	$\alpha_{d_i}=\displaystyle\sum_{\bfn\in S_{i,d_i}}p_{i,d_i,\bfn}a_{\bfr+d_i\bfe_i-\bfn}$
	and the following holds. There exist constants $C_3$ and 
	$C_4$ depending only on $m$, $M_i$, $D_i$, and $d_i$
	such that 
	$h(\alpha_j)\leq C_3\delta_i\log\Vert \bfr\Vert+C_4$
	for $j=0,\ldots,d_i$. By Proposition~\ref{prop:height properties}(d), we have $C_5$ and $C_6$ such that
	if $\alpha_{d_i}\neq 0$ then: 
	\begin{align}
	\begin{split}\label{eq:C5 C6}
	C_5\delta_i\log\Vert\bfr\Vert+C_6&\geq \vert h(\alpha_{d_i}r_i^{d_i}+\alpha_{d_i-1}r_i^{d_i-1}+
	\ldots+\alpha_0)-d_ih(r_i)\vert\\
	&=d_i\log(r_i)\\
	&\geq d_i\log(\epsilon_i \Vert\bfr\Vert)
	\end{split}
	\end{align}
	where the last inequality is under the further assumption
	that $r_i\geq \epsilon_i\Vert\bfr\Vert$. However 
	\eqref{eq:C5 C6} cannot hold when $\delta_i$ 
	is sufficiently
	small and $\Vert \bfr\Vert$ is sufficiently large,
	for instance when $C_5\delta_i\leq d_i/2$
	and $\Vert r\Vert >e^{2C_6}/\epsilon_i^2$. Hence under this
	further assumption, we must have
	$\alpha_{d_i}=0$. Notice
	that $\alpha_{d_i}=\displaystyle\sum_{\bfn\in S_{i,d_i}}p_{i,d_i,\bfn}a_{\bfr+d_i\bfe_i-\bfn}$
	is exactly the coefficient of $\bfx^{\bfr+d_i\bfe_i}$
	in $P_{i,d_i}f$
	and we finish the proof. 
	\end{proof}
	
	Proposition~\ref{prop:1 differential eq} is the
	key step in our proof of Thereom~\ref{thm:main 1 effective}:
	\begin{proof}[Proof of Theorem~\ref{thm:main 1 effective}]
	We apply Proposition~\ref{prop:1 differential eq} for
	each $i=1,\ldots,m$ with
	$\epsilon_i=1/2m$, 
	let $\delta$ be the minimum of the resulting
	$\delta_i$'s, and let $\eta'$ be the maximum
	of the resulting $\eta_i$'s. We now take:
	$$\eta'':=\tilde{\eta}+(2m-1)(d_1+\ldots+d_m).$$
	
	Let $\bfr\in\N_0^m$ with $\Vert\bfr\Vert>N+\eta''$.
	There exists $i\in\{1,\ldots,m\}$ such that
	$r_i\geq \Vert \bfr\Vert/m$. Hence the vector
	$\bfr':=\bfr-d_i\bfe_i$ satisfies
	$r'_i\geq \Vert \bfr'\Vert/2m$ and
	$\Vert\bfr'\Vert> N+\tilde{\eta}\geq N+\eta_i$. By
	Proposition~\ref{prop:1 differential eq}, 
	the coefficient of $\bfx^{\bfr}$ in
	$P_{i,d_i}f$ is zero. Therefore, if we choose
	$\eta:=\eta''+(m-1)D$
	then $P_{1,d_1}\ldots P_{m,d_m}f$
	is a polynomial of total degree at most
	$N+\eta$. 
	\end{proof}
	
	\subsection{Proof of part (b) of Theorem~\ref{thm:main 1 effective}}
	We will use the following simple result for univariate 
	rational functions:
	\begin{proposition}\label{prop:denominator 1 variable}
	Let $G(t)=\sum_{n\geq 0}g_nx^n\in \Qbar[[t]]$ 
	be a rational function that is not a polynomial. Assume
	$h(g_n)=o(n)$
	then every root of the denominator of $G$ is
	a root of unity. Moreover if 
	$$\limsup_{n\to\infty}\frac{h(g_n)}{\log n}\leq L<\infty$$
	then every root of the denominator of 
	$G$ has multiplicity at most $L+1$. 
	\end{proposition}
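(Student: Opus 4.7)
The first step is to use partial fractions to write, for $n$ sufficiently large,
$$g_n = \sum_{i=1}^k P_i(n)\alpha_i^n,$$
where the $\alpha_i \in \Qbar^*$ are distinct and each $P_i \in \Qbar[x]$ is nonzero; the roots of the denominator of $G$ are then exactly the $1/\alpha_i$, with multiplicity $d_i + 1$ where $d_i := \deg P_i$. So the two conclusions translate, respectively, into ``every $\alpha_i$ is a root of unity'' and ``$d_i \le L$ for every $i$.''

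For the first conclusion I argue by contradiction. Suppose some $\alpha_{i_0}$ is not a root of unity, and apply the shift-operator polynomial $\mathcal{L} := \prod_{i \neq i_0} (E - \alpha_i)^{d_i + 1}$ to the identity above, where $(Eu)_n := u_{n+1}$. A short computation shows: (i) $(E-\alpha_i)^{d_i+1}$ annihilates $P_i(n)\alpha_i^n$, and (ii) each factor $E - \alpha_i$ with $i \ne i_0$ sends any sequence $Q(n)\alpha_{i_0}^n$ to $\widetilde Q(n)\alpha_{i_0}^n$ with $\deg \widetilde Q = \deg Q$ and the leading coefficient multiplied by the nonzero constant $\alpha_{i_0}-\alpha_i$. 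Hence $\mathcal{L}g_n = \widetilde P(n)\alpha_{i_0}^n$ for a fixed \emph{nonzero} polynomial $\widetilde P$ of degree $d_{i_0}$. On the one hand, $\mathcal{L}$ is a fixed finite linear combination of shifts, so by Proposition~\ref{prop:height properties}(b),(c),
$$h(\mathcal{L}g_n) \;\le\; O(1) + \max_{0 \le j \le \deg \mathcal{L}} h(g_{n+j}) \;=\; o(n).$$
On the other hand, parts (c) and (d) of Proposition~\ref{prop:height properties} give
$$h(\widetilde P(n)\alpha_{i_0}^n) \;\ge\; n\,h(\alpha_{i_0}) \;-\; h(\widetilde P(n)) \;\ge\; n\,h(\alpha_{i_0}) - O(\log n),$$
and $h(\alpha_{i_0}) > 0$ by Kronecker's theorem. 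This is the desired contradiction.

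For the second conclusion, every $\alpha_i$ is now a root of unity; choose $N \ge 1$ with $\alpha_i^N = 1$ for every $i$. For each residue $r \in \{0,1,\ldots,N-1\}$, the restricted sequence
$$g_{Nm+r} \;=\; \sum_{i=1}^k \alpha_i^r\, P_i(Nm + r) \;=:\; Q_r(m)$$
is a polynomial in $m$ of degree at most $M - 1$, where $M := 1 + \max_i d_i$. The coefficient of $m^{M-1}$ in $Q_r(m)$ equals $N^{M-1}\sum_{i \in I}c_i\alpha_i^r$, with $I := \{i : d_i = M-1\}$ and $c_i$ the nonzero leading coefficient of $P_i$. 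By linear independence of the distinct characters $r \mapsto \alpha_i^r$ of $\Z/N\Z$, this function of $r$ is not identically zero; choose $r_0$ for which it is nonzero, so that $\deg Q_{r_0} = M - 1$. Then Proposition~\ref{prop:height properties}(d) applied to the fixed polynomial $Q_{r_0}$ at the integer $m$ yields $h(Q_{r_0}(m)) \ge (M-1)\log m - O(1)$, while the hypothesis gives $h(g_{Nm+r_0}) \le (L+\varepsilon)\log m + O(1)$ for any $\varepsilon > 0$ and all sufficiently large $m$. Comparing forces $M - 1 \le L$, as required.

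The delicate step is the degree-tracking computation producing $\widetilde P \not\equiv 0$ in the first half; once this is in hand, everything else is bookkeeping with the height inequalities of Proposition~\ref{prop:height properties} together with Kronecker's theorem and the linear independence of characters.
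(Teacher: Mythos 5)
Your proof is correct and takes essentially the same route as the paper's: partial fractions to get $g_n = \sum P_i(n)\alpha_i^n$, shift-operator manipulations together with Kronecker's theorem for the first claim, and restriction to an arithmetic progression for the second. The only cosmetic difference is that the paper phrases the first part as an induction on $\ell + \sum \deg P_i$ using the single step $g_{n+1} - \alpha_\ell g_n$, whereas you apply the full annihilating operator $\mathcal{L}$ in one shot and argue by contradiction — same idea, just unfolded versus iterated.
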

	\begin{proof}
	Let $\alpha_1,\ldots,\alpha_{\ell}$ be all the (distinct)
	 roots 
	of the 
	denominator of $G$; we have 
	$\alpha_i\in \Qbar^*$ for every $i$. 
	Then there exist
	$P_1(X),\ldots,P_{\ell}(X)\in\Qbar[X]\setminus\{0\}$
	such that for all sufficiently large $n$, we have:
	$$g_n=P_1(n)\alpha_1^n+\ldots+P_{\ell}(n)\alpha_{\ell}^n.$$
	
	For the first assertion, assume $h(g_n)=o(n)$ and we prove that all the $\alpha_i$'s are roots of unity. This can be done easily using induction on $r:=\ell+\sum_{i=1}^{\ell}\deg(P_i)$ and working with the sequence $g_{n+1}-\alpha_{\ell}g_n$ which lowers the value of $r$.
	
	For the second assertion, let $D$ denote the maximum of the degrees of the $P_i$'s.  Then for $n$ belonging to 
	an appropriate arithmetic progression, $g_n=\sum_{i=1}^{\ell}P_i(n)\alpha_i^n$ is a polynomial in $n$ with degree 
	$D$. Hence $D\leq L$ and this finishes the proof.
	\end{proof}
	
	We will use the following version of the Manin-Mumford
	conjecture for tori. For $n\geq 1$, by a torsion coset
	of $\Gmn$, we mean a torsion translate of an algebraic
	subgroup. For a closed subvariety $V$ of $\Gmn$,
	a torsion coset in $V$ means a torsion coset of
	$\Gmn$ that is contained in $V$. 
	\begin{theorem}\label{thm:Manin-Mumford}
	Let $n\geq 1$ and let $V$ be a closed subvariety of $\Gmn$
	defined over $\C$. Then the following hold:
	\begin{itemize}
		\item [(a)]  Every torsion coset in $V$ is contained
		in a maximal torsion coset in $V$.
		\item [(b)]  There are only finitely many maximal torsion cosets in $V$ and their union is the Zariski closure
		of torsion points in $V$.
	\end{itemize}
	\end{theorem}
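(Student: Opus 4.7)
The plan is to deduce both assertions from the classical Manin--Mumford conjecture for tori, proved by Laurent, which I will cite as a black box. In the form needed here this says: if $W\subseteq \Gmn$ is an irreducible closed subvariety whose set of torsion points is Zariski dense in $W$, then $W$ is itself a torsion coset.

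First I would settle (b). Let $Z$ denote the Zariski closure inside $V$ of the set of torsion points of $V$, and write $Z=Z_1\cup\cdots\cup Z_r$ for its decomposition into (finitely many) irreducible components. A short argument shows that each $Z_i$ must contain a Zariski-dense subset of torsion points: otherwise the Zariski closure $Z_i'$ of the torsion points of $Z_i$ would be a proper closed subset of $Z_i$, and replacing $Z_i$ by $Z_i'$ in the union would yield a proper closed subset of $Z$ still containing every torsion point of $V$, contradicting the definition of $Z$. Laurent's theorem then forces each $Z_i$ to be a torsion coset. Discarding those $Z_i$ that are properly contained in some other $Z_j$ leaves finitely many torsion cosets inside $V$, none contained in another; these are the maximal torsion cosets, and their union is $Z$, which by construction is the Zariski closure of the torsion points in $V$. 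This proves (b).

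Next I would derive (a) from (b). Let $T\subseteq V$ be a torsion coset. As a translate of a subtorus of $\Gmn$, $T$ is irreducible and its own torsion points are Zariski dense in $T$, so $T\subseteq Z$. Since $T$ is irreducible and $Z=Z_1\cup\cdots\cup Z_r$, we must have $T\subseteq Z_i$ for some $i$; and any $Z_i$ is by construction contained in one of the maximal torsion cosets produced in (b). Hence every torsion coset in $V$ lies in a maximal one.

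The serious obstacle is of course Laurent's theorem itself, where all the arithmetic depth of Manin--Mumford for tori resides; in the spirit of this paper one could reprove it via a Subspace Theorem argument \`a la Bombieri--Zannier, but for the purposes of the present statement I would simply invoke Laurent's result. Everything above it is a purely formal decomposition-and-irreducibility argument and requires no further input.
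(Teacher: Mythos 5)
Your argument is correct, but note that the paper itself gives no proof of this statement: it simply cites \cite{BG06_HI} (Chapter 3) together with the original sources of Laurent, Bombieri--Zannier, and Schmidt. Your formal reduction to Laurent's theorem --- decomposing the Zariski closure $Z$ of the torsion points of $V$ into irreducible components $Z_1,\dots,Z_r$, showing each $Z_i$ has dense torsion, and invoking Laurent to conclude each $Z_i$ is a torsion coset --- is the standard way to pass from the irreducible case to the finiteness statement (b), and your deduction of (a) from (b) via $T\subseteq Z$ and irreducibility of $T$ is exactly right. Two small remarks. First, your ``discarding'' step is vacuous: distinct irreducible components of $Z$ are never contained in one another, so the $Z_i$ are automatically the maximal torsion cosets, as one can check directly (if $Z_i\subsetneq T\subseteq V$ with $T$ a torsion coset, then $T$ is irreducible with dense torsion, so $T\subseteq Z$, so $T\subseteq Z_j$ for some $j$, forcing $Z_i\subsetneq Z_j$, impossible for components). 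Second, the paper's citation also carries an effective refinement --- the number of maximal torsion cosets is bounded explicitly in terms of $n$ and the degrees of the equations defining $V$ --- which the purely qualitative formulation of Laurent you invoke does not by itself produce; this quantitative strengthening (due to work of Bombieri--Zannier, Schmidt, and later authors) is one reason the paper points to the literature rather than to the soft derivation.
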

	\begin{proof}
	This is given in \cite[Chapter 3]{BG06_HI}
	following earlier work of Laurent \cite{Lau84_ED}, 
	Bombieri-Zannier \cite{BZ95_AP}, and Schmidt \cite{Sch96_HO}. In fact, the number of maximal torsion
	cosets can be bounded by an explicit expression involving
	only $n$ and the maximum of the degrees of polynomials
	defining $V$. 
	\end{proof}

	\begin{lemma}\label{lem:nonzero substitution}
	Let $P(x_1,\ldots,x_n)\in\C[x_1,\ldots,x_n]\setminus\{0\}$,
	then there exist finitely many proper vector subspaces
	$W_1,\ldots,W_k\subsetneq \Q^n$ such that for
	every $(u_1,\ldots,u_n)\in \N^n$ outside
	$\bigcup_{j=1}^k W_j$ we have
	$P(t^{u_1},\ldots,t^{u_n})$ is a non-zero polynomial in
	$t$.
	\end{lemma}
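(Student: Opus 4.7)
The plan is to expand $P(t^{u_1},\ldots,t^{u_n})$ in terms of the support of $P$ and reduce the statement to ensuring that the exponents appearing in that expansion are pairwise distinct, which will force no cancellation to occur.

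Concretely, write $P(\bfx)=\sum_{\alpha\in S}c_\alpha\bfx^\alpha$ where $S\subset \N_0^n$ is the (finite, non-empty) support of $P$ and each $c_\alpha\in\C^*$. Substituting $x_i=t^{u_i}$ gives
\[
P(t^{u_1},\ldots,t^{u_n})=\sum_{\alpha\in S}c_\alpha\, t^{\langle u,\alpha\rangle},
\]
where $u=(u_1,\ldots,u_n)$ and $\langle u,\alpha\rangle=u_1\alpha_1+\cdots+u_n\alpha_n\in\N_0$. If the map $\alpha\mapsto\langle u,\alpha\rangle$ is injective on $S$, the right-hand side is a $\C$-linear combination of distinct monomials $t^{\langle u,\alpha\rangle}$ with all non-zero coefficients $c_\alpha$, hence a non-zero polynomial in $t$.

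So I would define, for each (unordered) pair $\alpha\neq\beta$ in $S$, the hyperplane
\[
W_{\alpha,\beta}:=\{v\in\Q^n:\langle v,\alpha-\beta\rangle=0\}.
\]
Since $\alpha-\beta\neq 0$ in $\Z^n\subset\Q^n$, each $W_{\alpha,\beta}$ is a proper $\Q$-vector subspace of $\Q^n$. There are at most $\binom{\#S}{2}$ such subspaces, so finitely many. If $u\in\N^n$ lies outside $\bigcup_{\alpha\neq\beta}W_{\alpha,\beta}$, then $\langle u,\alpha\rangle\neq\langle u,\beta\rangle$ for all distinct $\alpha,\beta\in S$, and the argument above applies.

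There is essentially no obstacle here: the only observation to flag is that $\alpha-\beta\in\Z^n$ is a non-zero vector precisely because $\alpha,\beta\in S$ are distinct elements of $\N_0^n$, which guarantees $W_{\alpha,\beta}\subsetneq\Q^n$. The enumeration of the $W_j$'s is then just the enumeration of the finitely many pairs from the support of $P$.
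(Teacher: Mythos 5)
Your proof is correct and takes essentially the same approach as the paper's: the paper's proof is just a terse version of your argument, observing that cancellation forces two distinct monomials of $P$ to map to the same power of $t$, which imposes a non-trivial linear relation on $(u_1,\ldots,u_n)$. You have simply spelled out the finitely many hyperplanes $W_{\alpha,\beta}$ explicitly.
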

	\begin{proof}
		If we do the substitution $x_i=t^{u_i}$ and 
	get $P(t^{u_1},\ldots,t^{u_n})=0$, 
	then two distinct monomials in $P(x_1,\ldots,x_n)$ 
	yield the same $t^k$ and this gives rise to 
	a non-trivial linear relation among the $u_i$'s.
	\end{proof}
	
	\begin{proof}[Proof of part (b) of Theorem~\ref{thm:new main 1}]
	We have proved that $f$ is a rational function. Suppose 
	that $f$ is not a polynomial and write
	$\displaystyle f=\frac{F}{G}$
	where $F$ and $G$ are coprime polynomials in $\Qbar[x_1,\ldots,x_m]$ and $G$ is non-constant. 
	We first prove that every irreducible factor of $G$ has
	the form $1-\zeta\bfx^{\bfn}$ where
	$\zeta$ is a root of unity
	and $\bfn\in\N_0^m$.
	
	Since 
    the property
	$\cP$ still holds	
	after replacing $f$ by its product with a polynomial, 
	we may assume that $G$ is irreducible. 
	Fix an embedding of $\Qbar$ into $\C$, the 
	condition $h(a_{\bfn})=o(\log\Vert\bfn\Vert)$
	implies that $f$ is convergent in the polydisc
	$\cD$ given by $\vert x_i\vert<1$ for $1\leq i\leq m$. 
	For a polynomial $P\in\C[x_1,\ldots,x_m]$, let $\cZ(P)$
	denote the zero set of $P$.
	If 
	$G(0,\ldots,0)=0$ then $\cZ(G)\cap \cD$
	is contained in $\cZ(F)\cap \cD$ since
	$F=fG$ as analytic functions on $\cD$. 
	But this is impossible since $\cZ(F)\cap \cZ(G)$
	has strictly smaller dimension than $\cZ(F)$.
	Hence $G(0,\ldots,0)\neq 0$.
	
	Since $G$ is not one of the coordinate functions $x_i$'s,
	the closed subvariety $V$ of $(\C^*)^m$ defined by
	$G=0$ has dimension $m-1$ and our goal is to prove that
	$V$ is a torsion coset. Assume that each of the finitely 
	many maximal torsion coset in $V$ has codimension 
	at least $2$
	and we will arrive at a contradiction. Each such maximal 
	torsion coset satisfies at least $2$ independent 
	equations of
	the form:
	$$x_1^{\gamma_1}\ldots x_m^{\gamma_m}=1,$$
	$$x_1^{\delta_1}\ldots x_m^{\delta_m}=1.$$
	Therefore we can eliminate $x_m$ if necessary 
	to conclude that
	the maximal torsion coset is contained in
	the subgroup defined by an equation of the form:
	$$x_1^{\kappa_1}\ldots x_{m-1}^{\kappa_{m-1}}=1.$$
	Since there are only finitely many maximal torsion cosets,
	we obtain a finite set $\scrK$ of non-zero vectors
	in $\Z^{m-1}$ such that for every torsion
	point $(\xi_1,\ldots,\xi_m)\in V$,
	there is a vector $(\kappa_1,\ldots,\kappa_{m-1})\in \scrK$
	satisfying:
	\begin{equation}\label{eq:xi}
	\xi_1^{\kappa_1}\ldots\xi_{m-1}^{\kappa_{m-1}}=1.
	\end{equation}
	
	Let $K$ be a number field containing the coefficients
	of $F$ and $G$. By relabelling the $x_i$'s when necessary,
	we may assume that $G$ has the form:
	$$G=G_0+G_1x_m+\ldots+G_dx_m^d$$
	where $d\geq 1$, each $G_i$ is in
	$K[x_1,\ldots,x_{m-1}]$, and 
	$G_d\neq 0$. Since $F$ and $G$ are coprime, there exist  
	polynomials
	$\tilde{F},\tilde{G},\tilde{H}$ in
	$K[x_1,\ldots,x_{m-1}]$ with
	$\tilde{H}\neq 0$ such that
	$$\tilde{F}F+\tilde{G}G=\tilde{H}.$$
	By Lemma~\ref{lem:nonzero substitution}, 
	there is a union $W$ of finitely many proper subspaces
	of $\Q^{m-1}$ such that for 
	every $(u_1,\ldots,u_{m-1})\in \N^{m-1}\setminus W$,
	we have: 
	\begin{equation}\label{eq:tildeF tildeG tildeH}	
	G_d(t^{u_1},\ldots,t^{u_{m-1}})\tilde{H}(t^{u_1},\ldots,t^{u_{m-1}})\neq 0.
	\end{equation}
	
	By adding to $W$ the subspaces of $\Q^{m-1}$
	each of which is the orthogonal complement to
	some $(\kappa_1,\ldots,\kappa_{m-1})\in \scrK$, we may
	assume the additional property that
	$$u_1\kappa_1+\ldots+u_{m-1}\kappa_{m-1}\neq 0$$
	for every $(\kappa_1,\ldots,\kappa_{m-1})\in \scrK$.   
	
	Fix one such $(u_1,\ldots,u_{m-1})$ and let 
	$$B=\max\{\vert u_1\kappa_1+\ldots+u_{m-1}\kappa_{m-1}\vert:\ (\kappa_1,\ldots,\kappa_{m-1})\in\scrK\}.$$
	Let $S$ be a finite subset of $M_K$ containing 
	$M_K^{\infty}$ such that the ring of $S$-integers
	$\cO_{K,S}$ is a UFD and the coefficients of 
	$F$ and $G$ are in $\cO_{K,S}$. From $F=fG$ and
	the fact that $\cO_{K,S}[x_1,\ldots,x_m]$ is a UFD, 
	we conclude
	that the coefficients of $f$ are in $\cO_{K,S}$ too.
	Let $u_m\in\N$ that will be chosen to be sufficiently
	large. 
	
	Consider the following rational
	function in $t$:
	$$\frac{F(t^{u_1},\ldots,t^{u_m})}{G(t^{u_1},\ldots,t^{u_m})}=f(t^{u_1},\ldots,t^{u_m})=:\sum_{n\geq 0}\tau_nt^n$$
	where $\tau_n=\sum_{\bfn}a_{\bfn}$
	in which $\bfn$ ranges over all 
	$\bfn=(n_1,\ldots,n_m)$ with
	$n_1u_1+\ldots+n_nu_m=n$; there are $O(n^{m-1})$ such
	$\bfn$'s. Equation \eqref{eq:tildeF tildeG tildeH}
	implies  
	\begin{equation}\label{eq:gcd mid tildeH}
	\gcd(F(t^{u_1},\ldots,t^{u_m}),G(t^{u_1},\ldots,t^{u_m})) 
	\mid \tilde{H}(t^{u_1},\ldots,t^{u_{m-1}}).
	\end{equation} 
	Hence when $u_m$ is sufficiently large
	so that 
	$$\deg(G(t^{u_1},\ldots,t^{u_m}))=du_m+\deg(\tilde{G}(t^{u_1},\ldots,t^{u_{m-1}}))>\deg(\tilde{H}(t^{u_1},\ldots,t^{u_{m-1}})),$$
	$f(t^{u_1},\ldots,t^{u_m})$ is not a polynomial
	and its denominator is:
	\begin{equation}\label{eq:denominator expression}
	\frac{G(t^{u_1},\ldots,t^{u_m})}{\gcd(F(t^{u_1},\ldots,t^{u_m}),G(t^{u_1},\ldots,t^{u_m}))}.
	\end{equation}
	 
	Since the $a_{\bfn}$'s are in $\cO_{K,S}$ and $h(a_{\bfn})=o(\log\Vert\bfn\Vert)$, we have:
	\begin{itemize}
		\item $\tau_n$ is in $\cO_{K,S}$ for every $n$.
		\item $\vert\tau_n\vert_v\leq \max\{\vert a_{\bfn}\vert_v:\ \Vert\bfn\Vert\leq n\}=n^{o(1)}$ for
		every $v\in S\cap M_K^0$.
		\item $\vert\tau_n\vert_v\leq n^{m-1+o(1)}$ for
		every $v\in S\cap M_K^{\infty}$.	
	\end{itemize}
	Therefore $h(\tau_n)\leq (m-1+o(1))\log n$. 
	Proposition~\ref{prop:denominator 1 variable}
	implies that the denominator of
	$f(t^{u_1},\ldots,t^{u_m})$ has the form
	\begin{equation}\label{eq:exponents e_i}
	(t-\zeta_1)^{e_1}\ldots (t-\zeta_{\ell})^{e_{\ell}}
	\end{equation}
	where $\ell\geq 1$, the
	$\zeta_i$'s are $\ell$ distinct roots of unity,
	and $1\leq e_i\leq m$.
	
	From the expressions \eqref{eq:denominator expression}
	and \eqref{eq:exponents e_i} for the denominator
	of $f(t^{u_1},\ldots,t^{u_m})$
	and \eqref{eq:gcd mid tildeH}, we have:
	$$G(t^{u_1},\ldots,t^{u_m})=\gcd(F(t^{u_1},\ldots,t^{u_m}),G(t^{u_1},\ldots,t^{u_m}))\prod_{j=1}^{\ell}(t-\zeta_j)^{e_j}$$
	and
	\begin{align*}
	\ell m\geq\sum_{j=1}^{\ell}e_j&=\deg(G(t^{u_1},\ldots,t^{u_m})-\deg(\gcd(F(t^{u_1},\ldots,t^{u_m}),G(t^{u_1},\ldots,t^{u_m})))\\
	&\geq du_m+\deg(\tilde{G}(t^{u_1},\ldots,t^{u_{m-1}}))-\deg(\tilde{H}(t^{u_1},\ldots,t^{u_{m-1}})).
	\end{align*}
	
	With a sufficiently large $u_m$, we have $\ell>B$.
	Choose a $\zeta_i$ with $1\leq i\leq \ell$ such that
	$\zeta_i$ has order at least
	$\ell$. Since
	$G(\zeta_i^{u_1},\ldots,\zeta_i^{u_m})=0$, the point
	$(\zeta_i^{u_1},\ldots,\zeta_i^{u_m})$ is a torsion
	point of $V$. But we have
	$$(\zeta_i^{u_1})^{\kappa_1}\ldots 
	(\zeta_i^{u_m})^{\kappa_m}=\zeta_i^{u_1\kappa_1+\ldots+u_m\kappa_{m-1}}\neq 1$$
	for every $(\kappa_1,\ldots,\kappa_{m-1})\in\scrK$
	since $0<\vert u_1\kappa_1+\ldots+u_{m-1}\kappa_{m-1}\vert\leq B$ which is less than the order of $\zeta_i$.
	This contradicts \eqref{eq:xi}. Therefore
	$V$ itself is a torsion coset. Since
	$G(0,\ldots,0)\neq 0$, we conclude
	that $G$ has the form
	$1-\zeta\bfx^{\bfn}$.

	Now we no longer assume that $G$ is irreducible.
	The above arguments prove that every irreducible factor
	of $G$ has the form $1-\zeta\bfx^{\bfn}$. 
	To finish the proof of part (b) of 
	Theorem~\ref{thm:new main 1}, it 
	remains to show that every irreducible factor
	of $G$ has multiplicity $1$. As before, by 
	considering the product of $f$ with a polynomial, 
	we may assume that $G=(1-\zeta\bfx^{\bfn})^r$ 
	in which $r\in\N$ and $1-\zeta\bfx^{\bfn}$ is irreducible. 
	Let $s$ denote the order of $\zeta$, then we can write:
	$$f=\frac{F}{(1-\zeta\bfx^{\bfn})^r}=\frac{P}{(1-\bfx^{\bfn'})^r}$$
	where $\bfn':=s\bfn$ and $P(\bfx)\in \Qbar[\bfx]$
	that is not divisible by $1-\bfx^{\bfn'}$. Assume that
	$r\geq 2$ and we will arrive at a contradiction.
	Write 
	$$P(\bfx)=\sum_{\bfk\in S(P)}p_{\bfk}x^{\bfk}$$
	where $S(P):=\{\bfk\in\N_0^m:\ p_{\bfk}\neq 0\}$
	is the support of $P$. On $\N_0^m$, define the
	equivalence relation: $\bfk\sim \bfk'$
	if and only if $\bfk-\bfk'\in \Z\bfn$.
	The equivalence class of
	$\bfk\in\N_0^m$ is denoted $\overline{\bfk}$.
	Fix $\bfk^*\in S(P)$ and let $\alpha\in \N$ be
	sufficiently large, by computing the Taylor
	series of 
	$\displaystyle \frac{P}{(1-\bfx^{\bfn'})^r}$
	directly, we have that the coefficient
	$a_{\bfk^*+\alpha\bfn}$ is a polynomial of
	degree $r-1$ in $\alpha$ whose leading coefficient
	has the form
	$$c\sum_{\bfk\in \overline{\bfk^*}\cap S(P)}p_{\bfk}$$
	where $c$ is a non-zero constant.
	By the assumption on the height of the coefficients
	of $f$, we must have:
	$$\sum_{\bfk\in \overline{\bfk^*}\cap S(P)}p_{\bfk}=0.$$
	Since this is true for every $\bfk^*\in S(P)$, we have
	that $P$ is divisible by $1-\bfx^{\bfn'}$, contradiction.
	Hence $r=1$ and we finish the proof.
	\end{proof}

	\subsection{Proof of part (c) of Theorem~\ref{thm:new main 1}}
	We use induction on the number of variables $m$. The case
	$m=1$ follows from 
	Proposition~\ref{prop:denominator 1 variable}.
	Now consider $m\geq 2$ and assume that the conclusion
	holds for all power series with less than $m$ variables.
	
	If $x_m$ does not appear in the denominator of 
	$f$ then we can write $f$
	as a \emph{finite} sum:
	$$\sum_{n\geq 0}x_m^n f_n(x_1,\ldots,x_{m-1})$$
	in which each $f_n$ is D-finite and 
	satisfies property $\cP$ for power series 
	in $m-1$ variables. Then we are done by the induction
	hypothesis. So we may assume that $x_m$ appears in
	the denominator of $f$. By part (b), we can write:
	$$f=\frac{P(x_1,\ldots,x_m)}{Q(x_1,\ldots,x_{m-1})\prod_{i=1}^{\ell}(1-\zeta_i\bfx^{\bfn_i})}$$
	where the $1-\zeta_i\bfx^{\bfn_i}$'s are all
	the irreducible factors of the denominator of $f$
	in which $x_m$ appears and $Q(x_1,\ldots,x_{m-1})\in\Qbar[x_1,\ldots,x_{m-1}]$
	is the product of the remaining irreducible factors.
	
	Write $\bfn_i=(n_{i,1},\ldots,n_{i,m})$ for $1\leq i\leq \ell$, hence $n_{i,m}>0$ for every $i$. 
	Denote $r_{i,j}=n_{i,j}/n_{i,m}$ for
	$1\leq i\leq \ell$ and $1\leq j\leq m-1$. 
	Consider the change of variables: $x_m=y_m$, $x_{m-1}=y_{m-1}$, and 
	$$x_j=y_j(y_{j+1}\ldots y_{m-1})^{u_j}$$
	for $1\leq j\leq m-2$ where the $u_j$'s will be chosen as follows. We start with a sufficiently
	large $u_{m-2}\in\N$, then $u_{m-3}\in\N$ with
	a sufficiently large $u_{m-3}/u_{m-2}$,
	and so on until $u_1\in\N$ with a sufficiently
	large $u_1/u_2$ such that the following holds.
	First we consider the formal monomials
	$M_i=x_1^{r_{i,1}}\ldots x_{m-1}^{r_{i,m-1}}$
	with rational exponents for $1\leq i\leq \ell$.
	Then after the change of variables into the $y_j$'s,
	each $M_i$ becomes 
	a formal monomial in the $y_j$'s denoted by
	$y_1^{e_{i,1}}\ldots y_{m-1}^{e_{i,m-1}}$
	for $1\leq i\leq \ell$. With our choice of the $u_1,\ldots,u_{m-2}$, we have the following: 
	for $1\leq i, j\leq \ell$, if
	$$(r_{i,1},\ldots,r_{i,m-1}) \geq
	(r_{j,1},\ldots,r_{j,m-1})$$
	with respect to the lexicographic ordering
	on $\Q^{m-1}$ induced by the usual ordering $\geq$ on $\Q$
	then
	$$e_{i,1}\geq e_{j,1},\ldots,e_{i,m-1}\geq e_{j,m-1}.$$
	The power series obtained from $f$ after the change of variables
	into the $y_j$'s satisfies property $\cP$
	and its coefficients belong to a finite set if and
	only if the 
	$a_{\bfn}$'s do so.
	
	Therefore after a change of variables of the above form
	if necessary, we may assume that 
	for $1\leq i\neq j\leq \ell$ either
	$r_{i,k}\geq r_{j,k}$ for every $k\in\{1,\ldots,m-1\}$
	or $r_{i,k}\leq r_{j,k}$
	for every $k\in\{1,\ldots,m-1\}$.
	Moreover, by considering
	$f(x_1^L,\ldots,x_{m-1}^L,x_m)$
	where $L:=\lcm(n_{1,m},\ldots,n_{\ell,m})$, from now on
	we may assume that each $r_{i,k}\in \N_0$
	for every $1\leq i\leq \ell$ and $1\leq k\leq m-1$.

	Let $R(x_m)=\prod_{i=1}^{\ell}(1-\zeta_i\bfx^{n_i})$
	regarded as a polynomial in $x_m$ with coefficients
	in $\Qbar[x_1,\ldots,x_{m-1}]$ and we have
	$D:=\deg(R)=n_{1,m}+\ldots+n_{\ell,m}$. Then we can write:
	$$f(x)=\frac{P(x_1,\ldots,x_m)}{Q(x_1,\ldots,x_{m-1})\prod_{i=1}^{\ell}(1-\zeta_i\bfx^{\bfn_i})}=\sum_{N\geq 0} g_N(x_1,\ldots,x_{m-1})x_m^N.$$
	Each $g_N$ is a power series in the variables
	$x_1,\ldots,x_{m-1}$ and satisfies property $\cP$,
	hence each $g_N$ is a rational function and its denominator has the special form given in part (b). We also have that the coefficients of each $g_N$ belong to a finite set by the induction hypothesis. Such a finite set depends a priori on  $N$
	and our goal is to prove that there is a common finite set containing the coefficients
	of all the $g_N$'s.
	
	Observe that
	there is $N_1$ such that the sequence
	$(g_N)_{N\geq N_1}$ satisfies a linear recurrence relation
	whose characteristic polynomial
	is 
	$x_m^{D}R(1/x_m)$. 
	Since $1-\zeta_i\bfx^{\bfn_i}$ is irreducible
	for every $i$, we have that $\bfn_i$ is not a non-trivial
	integral multiple of a vector in $\N_0^m$. In particular, 
	$\bfn_i= \bfn_j$ if and only if
	$(r_{i,1},\ldots,r_{i,m-1})=(r_{j,1},\ldots,r_{j,m-1})$.
	Moreover, for $i\neq j$, since $1-\zeta_i\bfx^{\bfn_i}$
	and $1-\zeta_j\bfx^{\bfn_j}$ are distinct, if $\bfn_i=\bfn_j$ then we
	obviously have
	that $n_{i,m}=n_{j,m}$ and $\zeta_i\neq \zeta_j$.
	Therefore we have 
	exactly $D$ distinct characteristic roots
	denoted $\gamma_1,\ldots,\gamma_D$
	each of which has
	the form:
	$$\zeta_i^{1/n_{i,m}}M_i=\zeta_i^{1/n_{i,m}}x_1^{r_{i,1}}\ldots x_{m-1}^{r_{i,m-1}}$$
	for $1\leq i\leq \ell$ and each
	$\zeta_i^{1/n_{i,m}}$ denotes one of the $n_{i,m}$-th
	roots of $\zeta_i$. The difference of
	two different characteristic roots is either a 
	constant multiple of some $M_i$
	or has the form:
	\begin{equation}\label{eq:difference of 2 roots}
	\zeta_i^{1/n_{i,m}}x_1^{r_{i,1}}\ldots x_{m-1}^{r_{i,m-1}}-\zeta_j^{1/n_{j,m}}x_1^{r_{j,1}}\ldots x_{m-1}^{r_{j,m-1}}
	\end{equation}
	with $(r_{i,1},\ldots,r_{i,m-1})\neq (r_{j,1},\ldots,r_{j,m-1})$. Since we have that either 
	$r_{i,k}\geq r_{j,k}$ for every $k$ or
	$r_{i,k}\leq r_{j,k}$ for every $k$, the form
	\eqref{eq:difference of 2 roots}
	has the form
	\begin{equation}\label{eq:difference of 2 roots 2nd form}
	\xi_1 P_1(1-\xi_2 P_2)
	\end{equation}
	where $\xi_1$ and $\xi_2$ are roots of unity,
	$P_1$ and $P_2$ are monomials in $x_1,\ldots,x_{m-1}$.
	
	By the theory of linear recurrence sequences, we 
	have:
	$$g_N=\sum_{i=1}^D s_i\gamma_i^N$$
	for $N\geq N_1$ where the 
	$s_i$'s are the unique solution of the
	system of linear equations:
	$$s_1\gamma_1^{N_1+j}+\ldots+s_D\gamma_D^{N_1+j}=g_{N_1+j}\ \text{for $j\in\{0,\ldots,D-1\}$.}$$
	The determinant of the matrix 
	$(\gamma_i^{N_1+j})_{1\leq i\leq D,0\leq j\leq D-1}$
	is in $\Qbar[x_1,\ldots,x_{m-1}]$
	 and is equal to the product of 
	 a root of unity, a monomial,
	and polynomials of the form
	$(1-\zeta M)$
	where $\zeta$ is a root of unity and $M$ is
	a monomial in $x_1,\ldots,x_{m-1}$. Hence by Cramer's rule and the properties
	of the $g_{N_1+j}$ for $0\leq j\leq D-1$, each
	$s_i$ is a rational function whose denominator
	is the product of a monomial and 
	polynomials of the form $(1-\zeta M)$ as above.
	Replacing $f$ by its product with an appropriate monomial
	in $x_1,\ldots,x_{m-1}$ to cancel out the monomials in the denominators
	of the $s_i$'s, we may assume that 
	the denominator of each $s_i$ is the product
	of polynomials of the form $1-\zeta M$.
	
	Let $\ell'\leq \ell$ denote the number of distinct
	tuples among the tuples
	$(r_{i,1},\ldots,r_{i,m-1})$ for $1\leq i\leq \ell$.
	By relabelling those tuples, we may assume that 
	$(r_{i,1},\ldots,r_{i,m-1})$
	for $1\leq i\leq \ell'$ are
	all the distinct tuples
	and $r_{i,k}\leq r_{j,k}$ for every $k\in\{1,\ldots,m-1\}$
	and $i\leq j$.
	 Let 
	$N_2$ be the $\lcm$ of the orders of the roots
	of unity $\zeta_i^{1/n_{i,m}}$ for $1\leq i\leq \ell$. 
	For $0\leq \tau\leq N_2-1$, we restrict to
	the arithmetic progression $\{NN_2+\tau:\ N\geq 0\}$
	and get:
	$$g_{NN_2+\tau}=t_{\tau,1}M_1^{NN_2+\tau}+\ldots+t_{\tau,\ell'}M_{\ell'}^{NN_2+\tau}$$
	for all $N$ such that $NN_2+\tau\geq N_1$ where
	each $t_{\tau,k}$ is a rational function in $x_1,\ldots,x_{m-1}$ whose denominator 
	is the product of polynomials of the form
	$1-\zeta M$ as above. 
	Since the denominator of each $t_{\tau,k}$ has the mentioned
	form, it can be expressed as a power series
	in $x_1,\ldots,x_{m-1}$.
	
	Fix a $\tau\in\{0,\ldots,N_2-1\}$. Let $(e_1,\ldots,e_{m-1})\in \N_0^{m-1}$
	and let $c$ be the coefficient of $x_1^{e_1}\ldots x_{m-1}^{e_{m-1}}$
	in $t_{\tau,1}$. We choose $N$ so that $NN_2+\tau\geq N_1$ and 
	$$e_1+\ldots+e_{m-1}+(NN_2+\tau)\deg(M_1) < (NN_2+\tau)\deg(M_2).$$
	More specifically, let
	$$N=\max\left(\left\lceil \frac{e_1+\ldots+e_{m-1}}{N_2}\right\rceil,\left\lceil \frac{N_1}{N_2}\right\rceil\right)+1$$
	and we have that $c$ is the coefficient of
	$x_1^{e_1}\ldots x_{m-1}^{e_{m-1}} (x_1^{r_{1,1}}\ldots x_{m-1}^{r_{1,m-1}})^{NN_2+\tau}$
	in $g_{NN_2+\tau}$ which is also the coefficient of
	$x_1^{e_1}\ldots x_{m-1}^{e_{m-1}} (x_1^{r_{1,1}}\ldots x_{m-1}^{r_{1,m-1}})^{NN_2+\tau}x_m^{NN_2+\tau}$ in $f$.
	Since $f$ satisfies $\cP$, this implies that
	$h(c)=o(\log(e_1+\ldots+e_{m-1}))$, hence $t_{\tau,1}$ satisfies
	property $\cP$ as well.
	Having proved that $t_{\tau,1},\ldots,t_{\tau,i}$ satisfy
	property $\cP$, we use the equation
	$$g_{NN_2+\tau}-t_{\tau,1}M_1^{NN_2+\tau}-\ldots-t_{\tau,i}M_i^{NN_2+\tau}=t_{\tau,i+1}M_{i+1}^{NN_2+\tau}+\ldots+t_{\tau,\ell'}M_{\ell'}^{NN_2+\tau}$$
	and similar arguments to conclude that $t_{\tau,i+1}$ satisfies
	property $\cP$. 
	
	In conclusion, we have that $t_{\tau,i}$ satisfies $\cP$ for
	every $\tau\in\{0,\ldots,N_2-1\}$
	and $i\in\{1,\ldots,\ell'\}$. By the induction hypothesis, the coefficients
	of all those $t_{\tau,i}$'s
	belong to a finite set. Hence there is a finite set containing the coefficients
	of $g_{N}$ for $N\geq N_1$. Since the coefficients of each $g_N$ for $N<N_1$
	also contains in a finite set, we finish the proof.
	
	\subsection{Proof of Corollary~\ref{cor:B-C structure}}	
	We prove Corollary~\ref{cor:B-C structure} using
	standard specialization arguments. This gives 
	another proof of Theorem~\ref{thm:new B-C} in addition
	to the combinatorial method of Bell-Chen.

	Let $f(\bfx)\in K[[\bfx]]$ be D-finite and assume that
	the coefficients of $f$ belong to a finite set. By 
	Lemma~\ref{lem:Lipshitz}, we may assume $K=\bar{K}$ and
	for each $i=1,\ldots,m$, $f$ satisfies a linear partial
	differential equation as in the statement of this lemma.
	Let $R$ be the $\Qbar$-subalgebra of $K$ 
	generated by the coefficients of $f$ and the $A_{i,j}$'s
	and let $V$ be the affine algebraic variety with coordinate
	ring $R$. For every point $\zeta\in V(\Qbar)$, let
	$A_{i,j,\zeta}$ and $f_{\zeta}$ denote
	the corresponding specialization in $\Qbar[[\bfx]]$.
    We will consider $\zeta$ outside the proper Zariski closed
    subset defined by $A_{1,d_1}\ldots A_{m,d_m}=0$ so
    that the specializations of the given differential equations
    remain non-trivial.   
	
	By Noether normalization, there exist $y_1,\ldots,y_s\in R$
	algebraically independent over $\Qbar$ such that
	$R$ is finite over $\Qbar[y_1,\ldots,y_s]$ and this gives
	a finite surjective morphism
	$\pi:\ V\rightarrow \bA^s$. The set of points
	$(\alpha_1,\ldots,\alpha_s)\in\Qbar^s$ 
	where each $\alpha_i$ is a root of unity
	is Zariski dense in $\bA^s$. Each of the coefficients
	of $f$ and the $A_{i,j}$'s is a zero of a monic polynomial 
	with coefficients in
	$\Qbar[y_1,\ldots,y_s]$. Hence there is a positive constant $M$ 
	depending only on $R$ and a Zariski dense set 
	of $\zeta\in V(\Qbar)$ such that the following holds. For each
	$i=1,\ldots,m$, $f_{\zeta}$ satisfies the equation:
	$$\left(A_{i,d_i,\zeta}\left(\frac{\partial}{\partial x_i}\right)^{d_i}+\ldots+A_{i,1,\zeta}\frac{\partial}{\partial x_i}+A_{i,0,\zeta}(\bfx)\right)f_{\zeta}=0$$
	with $A_{i,d_i,\zeta}\neq 0$ and the heights of 
	the coefficients of $f_{\zeta}$ and the $A_{i,j,\zeta}$ 
	are bounded above by $M$. By Theorem~\ref{thm:main 1 effective}, 
	$A_{1,d_1,\zeta}\ldots A_{m,d_m,\zeta}f_{\zeta}$ is a polynomial
	and its total degree is bounded independently of
	$\zeta$. Since this holds for every $\zeta$ in a Zariski
	dense subset of $V(\Qbar)$, we have that
	$A_{1,d_1}\ldots A_{m,d_m}f$ is a polynomial and 
	this finishes the proof that $f$ is rational.
	
	Similarly, by Theorem~\ref{thm:new main 1}, we have that 
	for a Zariski dense set of points $\zeta\in V(\Qbar)$, 
	the denominator of $f_{\zeta}$ has
	the special form specified in
	part (b) of Theorem~\ref{thm:new main 1}. Therefore
	the denominator of
	$f$ has such a special form as well.

	\section{Proof of Theorem~\ref{thm:new main 2}}
	Let $d$, $k$, $\alpha_1,\ldots,\alpha_k$, $K$, $(a_n)_{n\geq 0}$
	be as in the statement of Theorem~\ref{thm:new main 2}.
	Assume that $f(x)=\displaystyle\sum_{n\geq 0}a_nx^n$
	is D-finite. By Lemma~\ref{lem:Lipshitz}, we have that $f$ 
	satisfies the equation:
	\begin{equation}\label{eq:sec 4 1}
	\left(P_D(x)\left(\frac{\partial}{\partial x}\right)^{D}+\ldots+P_{1}(x)\frac{\partial}{\partial x}+P_{0}(x)\right)f(x)=0
	\end{equation}
	where $P_{j}(x)\in \Qbar[x]$ for every
	$0\leq j\leq D$ and $P_{D}(x)\neq 0$. 
	If $D=0$, there is nothing to prove, so we may assume
	$D>0$. For $0\leq j\leq D$,
	let $S_j$ be the support of $P_j(x)$, let
	$B_j(x)\in\Z[x]$ be as in Section~\ref{subsec:Proof a}, and write
	$P_j(x)=\displaystyle\sum_{n\in S_j}p_{j,n}x^n$. As in the
	proof of Proposition~\ref{prop:1 differential eq},
	for every $r\in \N_0$, the coefficient of
	$x^r$ in the left-hand side of \eqref{eq:sec 4 1}
	is
	\begin{equation}\label{eq:sec 4 2}
	\sum_{j=0}^D\sum_{n\in S_j}p_{j,n}B_j(r+j-n)a_{r+j-n}=0
	\end{equation}
	with the convention that $a_n=0$ if $n\in\Z\setminus\N$.
	We now assume that $r$ is sufficiently large so
	that $r+j-n\geq 0$ for every $j\in\{0,\ldots,d\}$ and 
	$n\in S_j$. Then we apply the given formula
	for $a_{r+j-n}$ to \eqref{eq:sec 4 2} to obtain:
	\begin{equation}\label{eq:sec 4 3}
	\sum_{j=0}^D\sum_{n\in S_j}p_{j,n}B_j(r+j-n)\sum_{s=1}^k\sum_{t=0}^dc_{r+j-n,s,t}(r+j-n)^t\alpha_s^{r+j-n}=0.
	\end{equation}
	This equation can be written as
	$\displaystyle \sum_{s=1}^k \beta_{r,s}\alpha_s^r=0$
	where
	\begin{equation}\label{eq:sec 4 beta_{r,s}}
	\beta_{r,s}:=\sum_{j=0}^D\sum_{n\in S_j}\sum_{t=0}^d p_{j,n}B_j(r+j-n)c_{r+j-n,s,t}\frac{(r+j-n)^t}{\alpha_s^{n-j}}.
	\end{equation}
	By Proposition~\ref{prop:height properties}
	and the given properties of the $c_{r+j-n,s,t}$'s, we have:
	\begin{equation}\label{eq:sec 4 lim h}
	\lim_{r\to\infty}\frac{h(\beta_{r,s})}{(\log r)^{D+d+1}}=0
	\end{equation}
	for every $s=1,\ldots,k$. Consider the equivalence relation
	$\sim$ on $\{1,\ldots,k\}$ defined by
	$i\sim j$ if and only if
	$\alpha_i/\alpha_j$ is a root of unity. Assume there are
	$\gamma$ equivalence classes
	and let $s_1,\ldots,s_{\gamma}$ be the representatives. The equation $\displaystyle\sum_{s=1}^k\beta_{r,s}\alpha_s^r=0$
	can be rewritten as:
	\begin{equation}\label{eq:sec 4 s_gamma}
	\sum_{\ell=1}^{\gamma} \left(\sum_{i\sim s_{\ell}} 
	\beta_{r,i}\frac{\alpha_{i}^r}{\alpha_{s_{\ell}}^r}\right)\alpha_{s_{\ell}}^r=0.
	\end{equation}
	
	Note that the tuple $(\alpha_{s_{\ell}})_{\ell}$ is 
	non-degenerate and the height of each coefficient
	$\displaystyle\sum_{i\sim s_{\ell}} 
	\beta_{r,i}\frac{\alpha_{i}^r}{\alpha_{s_{\ell}}^r}$
	is $o((\log r)^{D+d+1})$. 
	By Proposition~\ref{prop:general SML}, we must have that
	\begin{equation}\label{eq:sec 4 13}
	\sum_{i\sim s_{\ell}} 
	\beta_{r,i}\frac{\alpha_{i}^r}{\alpha_{s_{\ell}}^r}=0
	\end{equation}
	for all $\ell=1,\ldots,\gamma$
	for all sufficiently large $r$. We now apply the same trick
	as in the proof of Proposition~\ref{prop:1 differential eq},
	each $\beta_{r,i}$ is a linear combination of
	$1,r,\ldots,r^{d+D}$  
	in which the height of each coefficient is 
	$o(\log r)$. Arguing as in the proof of Proposition~\ref{prop:1 differential eq}, \eqref{eq:sec 4 13} implies:
	\begin{equation}\label{eq:sec 4 14}
	\sum_{i\sim s_{\ell}}\sum_{n\in S_D}p_{D,n}\frac{c_{r+D-n,i,d}}{\alpha_i^{n-D}}\frac{\alpha_i^r}{\alpha_{s_{\ell}}^r}=0
	\end{equation}
	for all $\ell=1,\ldots,\gamma$ for all sufficiently large $r$.
	By multiplying both sides of \eqref{eq:sec 4 14} by
	$\alpha_{s_{\ell}}^r$ and summing over all 
	$\ell=1,\ldots,\gamma$,
	we obtain:
	\begin{equation}\label{eq:sec 4 15}
	\sum_{s=1}^k \sum_{n\in S_D} p_{D,n}c_{r+D-n,s,d}\alpha_s^{r+D-n}=0
	\end{equation}
	for all sufficiently large $r$. Put 
	$g(x)=\displaystyle\sum_{n\geq 0}(\sum_{s=1}^k c_{n,s,d}\alpha_s^n)x^n$ and observe that the left-hand side of 
	\eqref{eq:sec 4 15} is exactly the coefficient of
	$x^{r+D}$ in $P_Dg$. Therefore $g$ is a rational function. 
	Consider the operator $\Qbar[[x]]\rightarrow\Qbar[[x]]$
	given by 
	$$F\mapsto x\frac{\partial F}{\partial x}.$$
	By applying this operator to $g$ for $d$ many times, we can show
	that 
	$$\tilde{g}(x)=\sum_{n\geq 0}(\sum_{s=1}^k c_{n,s,d}n^d\alpha_s^n)x^n$$
	is a rational function. This yields two things. First,
	Theorem~\ref{thm:new main 2} holds when $d=0$. Second,
	the power series
	$$f(x)-\tilde{g}(x)=\sum_{n\geq 0}(\sum_{s=1}^k\sum_{t=0}^{d-1} c_{n,s,t}n^t\alpha_s^n)x^n$$
	is D-finite so that we can finish the proof by using induction.
	
	\begin{remark}
	From the above proof, we have that if $f$ is D-finite then
	for each $t\in\{0,\ldots,d\}$, the 
	power series
	$\displaystyle\sum_{n\geq 0} (\sum_{s=1}^kc_{n,s,t}\alpha_s^n)x^n$
	is a rational function.
	\end{remark}
	
	\section{Proof of Theorem~\ref{thm:new main 3}}
	Since $h(a_n)\leq h(a_0,\ldots,a_n)$, it remains
	to show that:
	$$\limsup_{n\to\infty}\frac{h(a_0,\ldots,a_n)}{n\log n}<\infty.$$
	Let $K$ be a number field containing the coefficients
	of $f$. As before, we have that
	that the coefficients $(a_n)$ eventually satisfy a linear recurrence relation with polynomial
	coefficients. In other words, there exist 
	$M\in\N_0$ and polynomials
	$R_0(t),\ldots,R_M(t)\in \Qbar[t]$
	with $R_M\neq 0$
	such that
	\begin{equation}\label{eq:P-recursive}
	R_M(n)a_{n+M}+\ldots+R_0(n)a_n=0
	\end{equation}
	for all sufficiently large $n$. 
	
	Let $v\in M_K$. If $v$ is non-archimedean, we have: 
	$$\vert a_{n+M}\vert_v\leq \max_{0\leq i\leq M-1} \left\vert\frac{R_{i}(n)}{R_M(n)}a_{n+i}\right\vert_v$$
	which implies:
	$$\max_{0\leq i\leq n+M}\log^{+}\vert a_{i}\vert_v
	\leq \max_{0\leq i\leq n+M-1}\log^{+}\vert a_i\vert_v
	+\max_{0\leq i\leq M-1}\log^{+}\left\vert \frac{R_i(n)}{R_M(n)}\right\vert_v.
	$$
	If $v$ is archimedean, we have:
	$$\vert a_{n+M}\vert_v\leq \vert M\vert_v\max_{0\leq i\leq M-1} \left\vert\frac{R_{i}(n)}{R_M(n)}a_{n+i}\right\vert_v$$
	which implies:
	$$\max_{0\leq i\leq n+M}\log^{+}\vert a_{i}\vert_v
	\leq \log\vert M\vert_v + \max_{0\leq i\leq n+M-1}\log^{+}\vert a_i\vert_v
	+\max_{0\leq i\leq M-1}\log^{+}\left\vert \frac{R_i(n)}{R_M(n)}\right\vert_v.
	$$
	
	Summing over all $v$, we have:
	$$h(a_0,\ldots,a_{n+M})-h(a_0,\ldots,a_{n+M-1})=O(\log n)$$
	and this yields the desired result.

	\bibliographystyle{amsalpha}
	\bibliography{Dfinite_bib} 	

\end{document}